\documentclass{article}
\makeatletter
\newcommand{\crossref}[1]{}
\newcommand{\ams}[1]{}
\newcommand{\mi}[1]{}
\newcommand{\zmath}[1]{}
\newcommand{\ads}[1]{}
\makeatother

\usepackage[utf8]{inputenc}
\usepackage[english,russian]{babel}
\usepackage[english]{babel}
\usepackage[T1]{fontenc} 
\usepackage[tbtags]{amsmath}
\usepackage{amsfonts,amssymb,mathrsfs,amscd}
\usepackage{titleps}

\let\No=\textnumero

\usepackage[hyper]{izv2e}

\numberwithin{equation}{section}

\theoremstyle{plain}
\newtheorem{theorem}{Theorem}
\newtheorem{lemma}{Lemma}[section]
\newtheorem{propos}{Proposition}

\newtheorem{coro}{Corollary}

\theoremstyle{definition}

\newtheorem{proof}{Proof}

\newtheorem{remark}{Remark}

\def\acts{\curvearrowright}
\def\em{\varnothing}

\def\Nb{\mathbb N}

\def\Cb{\mathbb C}
\def\Tb{\mathbb T}

\def\A{\mathcal A}
\def\Cc{\mathcal C}
\def\U{\mathcal U}

\def\L{\mathcal L}
\def\R{\mathcal R}

\def\W{\mathcal W}

\def\sm{\setminus}
\def\ovl{\overline}
\def\phi{\varphi}
\def\epsilon{\varepsilon}

\DeclareMathOperator*{\Nabla}{\nabla}

\newpagestyle{myheaders}{
	\sethead{}
	{\ifodd\value{page}G.B. Sorin\else Roelcke and WAP compactifications of automorphism groups\fi}
	{\thepage}
}

\newcommand{\myname}{G.B. Sorin}

\begin{document}
	\pagestyle{myheaders}
	\thispagestyle{empty}
	
	\title{Roelcke and WAP compactifications of automorphism groups of ultrahomogeneous cyclically ordered sets}
	\author{\texorpdfstring{G.\,B.~Sorin}{G. B. Sorin}}
	\address{Lomonosov Moscow State University}
	\email{georgsorin@yandex.ru}

	\maketitle
	
	\begin{fulltext}
		
		\begin{abstract}
			In this work we describe Roelcke and WAP compactifications of automorphism groups of discrete ultrahomogeneous cyclically ordered sets in the topology of pointwise convergence.
			
			Keywords: cyclically ordered set, automorphism group, topological group, compactification.
			
			MSC: Primary 22F50, 54D35; Secondary 54H15, 20B27
		\end{abstract}

\section{Introduction}\label{section-intro}
	
	Among the compactifications of a topological group $G$, there are those that arise naturally. Examples of such compactifications are the Roelcke compactification $b_rG$ and the WAP compactification $wG$. These can be described as the Gelfand spaces of the corresponding $C^*$-algebras. In the case of $b_rG$, this is the algebra $UC(G)$ of all uniformly continuous functions $(G,\L\land \R)\to \Cb$, where $\L\land \R$ denotes the Roelcke uniformity on the group $G$. The compactification $wG$ corresponds to the algebra $WAP(G)$ of weakly almost periodic functions on G.	The compactification $wG$ is the largest in the class of semitopological semigroup compactifications of $G$. While the Roelcke compactification is always proper (i.e., $G$ is homeomorphically embedded as a dense subset in $b_rG$), the WAP compactification may be improper (i.e., $G$ is continuously mapped onto a dense subset of $wG$). 	The first example of a group with a trivial (consisting of a single point) WAP compactification is the automorphism group of a line segment $Aut(I,<)$ in the topology of pointwise convergence \cite{MegWapTriv}. From the inclusion $UC(G)\supset WAP(G)$, the inequality of compactifications $b_rG \geq wG$ follows. Some examples and properties of $b_rG$ and $wG$ can be found in \cite{Usp0}.
	
	An explicit description of $b_rG$ in several cases allows one to establish the topological simplicity or minimality of G (see \cite{Usp1, Usp2, Usp3}). Functions from $WAP(G)$ are matrix coefficients of representations of the group $G$ on reflexive Banach spaces (\cite{GlasMegNewAlg} and references therein). Additionally, WAP and Roelcke compactifications of automorphism groups of $\aleph_0$-categorical structures are related to model theory (see \cite{benYaTsank} and references therein).
	
	Quite few examples of groups are known for which one can explicitly describe their Roelcke or WAP compactifications. Some of them can be found in the works \cite{Usp1, Usp2, Usp3, benYaTsank, KozLeid-2, BSorin}. For such a description, one typically constructs a certain compactification and then proves that it is equivalent to $b_rG$ or $wG$. To find suitable compactifications, the following constructions are effective: (1) Ellis compactification $e_K	G$ --- the Ellis enveloping semigroup of the action of G on some compact space K \cite{Ellis}; the graph compactification $b_X G$ of the action of $G$ on a compact or locally compact space $X$ (see, e.g., \cite{Usp2, Kennedy, Yama}).
	
	In both constructions $G$ is a topological transformation group of some space $X$. If $X$ is compact, one can directly apply approaches (1) and (2). Otherwise, one may consider equivariant compactifications of the $G$-space $X$ (if they exist) and then apply (1) or (2). This is precisely how Ellis compactifications of our group of interest are obtained (using (1)) in the article \cite{GSorin26}, and how graph compactifications are obtained (using (2)) in the present work.
	
	Approach (1) is discussed in \cite{GlasMegNewAlg, KozSor24, KozSor25}. In \cite[Corollary 4.11]{GlasMegNewAlg}, a criterion is given for when the Roelcke compactification is an Ellis compactification. A criterion for when a $G$-compactification of a group G is an Ellis compactification is given in \cite{KozLeid-1}. For the automorphism group of an ultrahomogeneous cyclically ordered set that interests us, three Ellis compactifications are described in \cite{GSorin26}. There it is also shown that none of the obtained compactifications is equivalent to the Roelcke compactification.
	
	The construction of a graph compactification of a group is as follows. Let a topological group $(G,\tau_{co})$ act effectively and continuously on a compact space $X$ ($\tau_{co}$	denotes the compact-open topology). Then, for each $g\in G$ the graph $\Gamma(g) = \{(x,g(x))\in X^2 \;\vert\; x\in X\}$ is a closed subset of $X\times X$. Let $2^{X\times X}$	denote the set of all closed subsets of $X\times X$, endowed with the Vietoris topology $\tau_V$. Then $2^{X\times X}$ is a compact space \cite{Beer}. Thus, a mapping $i_X: G\to 2^{X\times X} : g\mapsto \Gamma(g)$ is defined. The effectiveness of the action implies the injectivity of $i_X$. The restriction of the topology $\tau_V$ to $i_X	(G)$ coincides with $\tau_{co}$. By taking the closure of the image, we obtain the compactification $b_XG = \left(\ovl{i_X(G)}, i_X\right)$ of the group $G$.
	
	The same approach applies in the case of a locally compact, locally connected X, with the topology $\tau_V$	replaced by the Fell topology $\tau_F$. Also, in the case of a non-compact $X$, one may replace the $G$-space $X$ with one of its equivariant compactifications $cX$, provided that the extended action $G\acts cX$ induces the same topology on $G$: $\tau_{co}(X) = \tau_{co}(cX)$.
	
	This approach was considered in \cite{Kennedy, Usp1, Usp2, Yama, KozLeid-1, KozLeid-2}.
	
	An advantage of the graph compactification $b_X	G$ is that the left and right actions of the group on itself (by left and right multiplication) as well as the inversion operation $(\cdot)^{-1}$ extend continuously to it. The Roelcke compactification $b_rG$ has the same properties, while $wG$ possesses the first two. Therefore, $b_XG$ turns out to be a good candidate for the equalities $b_rG = b_XG$ and $wG = b_XG$. A criterion for when the graph compactification of a topological group $(G,\tau_{co})$ coincides with the Roelcke compactification is given in \cite[Theorem 3.35]{KozLeid-2} (see Lemma \ref{lemma-reolcke-equality} below).
	
	While the graph compactification may not have a semigroup structure (like the Roelcke compactification), the Ellis compactification is always a right topological semigroup.

	\bigskip
	\pagestyle{myheaders}

	In this work, we describe graph compactifications of the group $G =$ $(Aut(X,R), \tau_\partial)$ of automorphisms of a discrete ultrahomogeneous cyclically ordered set $(X,R)$. We consider the natural action $G\acts X$, where the group $G$ is endowed with the permutation topology $\tau _{\partial }$, i.e., the topology of pointwise convergence under the action on the discrete space $X$. The applicability of the described approach to the group $G$ under consideration is justified by the following facts:
	
	1) the topology of pointwise convergence is an admissible (the action is continuous) group topology on the isometry group of a metric (discrete) space \cite[Chapter 2, Ex. 3]{RD}, hence $G$ is a topological group continuously acting on $X$;
	
	2) the $G$-space $(G, X, \acts)$ is $G$-Tychonoff (its equivariant compactifications are described in \cite{GSorin26}), hence the topology of pointwise convergence $\tau_p(bX)$ for the extended action $G\acts bX$ coincides with the original topology $\tau _{\partial}$ for each equivariant compactification $bX$ \cite[Lemma 2.2]{KozSor25} or \cite[Proposition 2.10]{KozLeid-1};
	
	3) if for an action $G\acts X$ of a group on a topological space $X$ the topology of pointwise convergence $\tau _{p}$ is an admissible group topology on $G$, then $\tau_p = \tau_{co}$ \cite[Lemma 2.12]{KozLeid-2}.
	
	The main results of this work are descriptions of the Roelcke compactification $b_{r}G$ (Theorem \ref{theorem-roelcke-equals-beta}) and the WAP compactification $wG$ (Theorem \ref{theorem-wG-description}) of the group $G=(Aut(X,R),\tau_\partial)$, as well as the WAP compactification of the group $(Aut(X,R),\tau_p)$ (Theorem \ref{theorem-wap-trivial}). The Roelcke compactification is obtained as the graph compactification $b_{\beta_GX} G$ (Proposition \ref{prop-b_betaXG-description}) corresponding to the maximal equivariant compactification $\beta _{G}X$ of the $G$-space $X$. The WAP compactification is obtained as the graph compactification $b_{X}G$ (Proposition \ref{prop-b_XG-description}).

	\section{Preliminaries}\label{section-prelim}
	
	A ternary relation $R$, also denoted by $[\cdot, \cdot, \cdot]$, on a set $X$ ($|X| \geq 3$) is called a \textit{cyclic order relation} if the following conditions are satisfied:\\	
	1) \textit{cyclicity}: $[a,b,c] \implies [b,c,a]$;\\
	2) \textit{asymmetry}: $[a,b,c] \implies \lnot [b,a,c]$;\\
	3) \textit{transitivity}: $[a,b,c] \wedge [a,c,d] \implies [a,b,d]$;\\
	4) \textit{completeness of the order}: for any pairwise distinct $a,b,c \in X$, either $[a,b,c]$ or $[a,c,b]$.
	
	If $R$ satisfies these conditions, then the pair $(X,R)$ is called a \textit{cyclically ordered} (\textit{c.o.}) \textit{set} (see, e.g., \cite{Novak}).
	
	The relation $[\cdot, \cdot, \cdot]$ can be extended to all finite sets.
	
	Let $\overline{x} = (x_1,\dots, x_n)$ be an ordered tuple of points from $X$, $n\geq3$. We say that $\overline{x}$ is a \textit{cycle} in $X$ and write $[x_1,\dots, x_n]$ if $\forall\ i<j<k \; [x_i,x_j,x_k]$. We also assume that:
	
	(1) a tuple of length $n = 1$ is a cycle;
	(2) a tuple $\overline{x} = (x_1,x_2)$ of length $n = 2$ is a cycle if $x_1\neq x_2$.
	
	In particular, $[x_1,\dots, x_n]$ means that the points $x_1,\dots, x_n$ are pairwise distinct.
	
	For a point $x\in X$, a linear order $<_x$ on the set $X\setminus \{x\}$ is defined: $a<_x b \iff [x,a,b]$.
	
	For points $a,b$ in a c.o. set $(X,R)$, the \textit{interval} $(a,b)_R = \{x\in X \;\vert\; [a,x,b]\}$ is defined. By analogy with linearly ordered sets, one can define half-open intervals $[a,b)_R$, $(a,b]_R$ and the segment $[a,b]_R$. The index $R$ in the notation of intervals will be omitted. A subset $A\subset X$ inherits the cyclic order relation: $R_A = A^3 \cap R$, and the pair $(A,R_A)$ is a c.o. set. A subset $M \subset X$ is called \textit{convex} if for all $a, b\in M$, at least one of the intervals $(a,b)$, $(b,a)$ is contained in $M$.

	An \textit{isomorphism} of c.o. sets $(X,R_X)$ and $(Y,R_Y)$ is a bijection $f:X\to Y$ preserving the cyclic order: $\forall a,b,c\in X \; [a,b,c]_X \iff [fa,fb,fc]_Y$. An \textit{automorphism} of a c.o. set $(X,R)$ is an isomorphism $f:X\to X$. The set of all automorphisms of $(X,R)$ forms a group, denoted by $Aut(X,R)$. For $A,B\subset X$, a \textit{partial automorphism of $(X,R)$} is an isomorphism $p: A\to B$. The set of all partial automorphisms of $(X,R)$ forms an inverse semigroup, denoted by $pAut(X,R)$.
	
	For a subgroup $G$ of $Aut(X,R)$, the action $G\acts (X,R)$ of the group $G$ on the c.o. set $X$ is called \textit{ultratransitive} if for every $n\in \mathbb{N}$ and for any cycles $\overline{x} = (x_1, \dots, x_n)$, $\overline{y} = (y_1, \dots, y_n)$, there exists $g\in G$ such that $gx_i = y_i$, $i=1,\dots, n$. A c.o. set $(X,R)$ is called \textit{ultrahomogeneous} if the action $Aut(X,R)\acts (X,R)$ is ultratransitive. If the set $(X,R)$ is ultrahomogeneous, then the order $R$ is \textit{dense} --- $(a,b)_R\neq \emptyset$ when $a\neq b$. If $Aut(X,R)\acts (X,R)$ is ultratransitive and $x\in X$, then the action of the stabilizer of the point $x$ on the linearly ordered set $St_x\acts (X\setminus \{x\}, <_x)$ is ultratransitive, i.e., for every $n\in \mathbb{N}$ and any tuples $x_1<_x \dots <_x x_n$ and $y_1<_x \dots <_x y_n$ (where $x_i, y_i\in X\setminus\{x\}$) there exists $g\in St_x$ such that $gx_i=y_i, \; i =1,\dots, n$.
	
	Examples of ultrahomogeneous c.o. sets include the circle $\mathbb{T} = \mathbb{R}/\mathbb{Z}$ (the cyclic order is defined by choosing the counter-clockwise direction) and its countable subset $\mathbb{Q}_0 = \mathbb{Q}/\mathbb{Z}$.
	
	\medskip
	
	A $G$-space is a triple $X = (G,X,\acts)$, where $G$ is a topological group, $X$ is a topological space, and $\acts:G\times X\to X$ is a continuous action of $G$ on $X$. In particular, the group $G$ is a $G$-space, where the action is left multiplication ($(g,h)\mapsto gh$).
	
	A compactification $bX$ of a $G$-space $(G,X,\acts)$ is called {\it equivariant} if the action $\acts$ extends continuously to an action of $G$ on $bX$. For more details, see \cite{Meg84}.
	\medskip
	
	For a space $Y$, $CL(Y)$ denotes the set of all non-empty closed subsets of $Y$, and $2^Y = CL(Y) \cup \{\em\}$. For $E\subset Y$, let
	\[
	\begin{aligned}
		E^+ &= \{F\in CL(Y) \;\vert\; F\subset E\}\\
		E^- &= \{F\in CL(Y) \;\vert\; F\cap E\neq \emptyset\}.
	\end{aligned}
	\]
	
	A subbase for the \textit{Vietoris topology} $\tau_V$ on $CL(Y)$ consists of all sets of the form $W^-$ and $W^+$, where $W$ is open in $Y$. The point $\em$ is an isolated point in $2^Y$. A base for the topology $\tau_V$ on $CL(Y)$ consists of sets of the form
	\[
	[V_1, \dots, V_n] = \{F\in CL(Y) \;\vert\; \forall i\leq n \; (F\cap V_i)\neq \em, \; F\subset \bigcup\limits_{i=1}^n V_i \},
	\]
	where $V_i$ are open sets in $Y$. For brevity, instead of $[V_1, \dots, V_n]$ we will also write $[\mathcal{W}]$, where $\mathcal{W} = \{V_1, \dots, V_n\}$. The space $CL(Y) = (CL(Y),\tau_V)$ is Tychonoff if and only if $Y$ is normal. If $Y$ is compact, then $CL(Y)$ is compact.
	
	\smallskip
	
	A subbase for the \textit{Fell topology} $\tau_F$ on $CL(Y)$ consists of all sets of the form $W^-$, where $W$ is a non-empty open subset of $Y$, and all sets of the form $W^+$, where $W$ is a non-empty open subset of $Y$ with compact complement. A base for the topology $\tau_F$ at the point $\em$ consists of sets of the form $\{F\in 2^Y \;\vert\; F\cap K = \em\}$, where $K$ is a compact subset of $Y$.
	
	Thus, a base for the topology $\tau_F$ on $2^Y$ consists of sets of the form
	\[
	[V_1, \dots, V_n; K_1, \dots, K_m] = \{F\in 2^Y \;\vert\; \forall i\leq n,\; j\leq m \; (F\cap V_i) \neq\em, \; (F\cap K_j) = \em \}.
	\]
	
	In the case when one of the families $\{V_1, \dots, V_n\}$ or $\{K_1, \dots, K_m\}$ is empty, we denote the corresponding base element by $[\em; K_1, \dots, K_m]$ or $[V_1, \dots, V_n;\em]$, respectively.
	
	If $Y$ is Hausdorff and locally compact, then $2^Y$ is Hausdorff and compact. If $Y$ is compact, then $\tau_F = \tau_V$ (on the spaces $CL(Y)$ and $2^Y$). Detailed information about the topologies $\tau_V$ and $\tau_F$ can be found in \cite{Beer}.
	
	\medskip
	
	We will be interested in the spaces $2^Y$ and $CL(Y)$ in the case $Y=X\times X$. For elements $A,B\in 2^{Y}$, points $x, y, z\in X$ and $M\subset X$, we define: $sA = \{(y,x)\in Y \;\vert\; (x,y)\in A\}$; $A[x] = \{y\in X \;\vert\; (x,y)\in A\}$; $A[M] = \bigcup\limits_{x\in M} A[x]$; $D(A) = \{x\in X \;\vert\; \exists y\in X : (x,y)\in A\}$ --- the domain of $A$; $I(A) = \{y\in X \;\vert\; \exists x\in X : (x,y)\in A\} = A[D(A)]$ --- the range of $A$; $BA = \{(x,z)\in Y\;\vert\; \exists y\in X : (x,y)\in A, (y,z)\in B\}$ --- the composition of $A$ and $B$.

	\medskip
	
	A topological space $S$ that is a semigroup is called a \textit{right topological semigroup} (respectively, a \textit{semitopological semigroup}) if $\forall f,g\in S$ the map $f\mapsto fg$ is continuous (respectively, if the map $(f, g)\mapsto fg$ is separately continuous ($\forall f,g\in S$, the maps $f\mapsto fg$ and $f\mapsto gf$ are continuous)). Detailed information on topological groups and semigroups can be found in~\cite{ArhangelskiiTkachenko}.
	
	\medskip
	
	Let $G$ be a topological group. We denote by $C_b(G)$ the $C^*$-algebra of continuous bounded complex-valued functions $G\to \mathbb{C}_b$. According to the Gelfand duality, there is a correspondence between $C^*$-subalgebras of $C_b(G)$ and (improper) compactifications of $G$ (see, e.g., \cite{JStone}). We are interested in the Roelcke compactification $b_rG$ and the WAP compactification $wG$, which can be defined as follows.
	
	The \textit{Roelcke compactification} $b_rG$ is the Gelfand space (the space of maximal ideals) of the $C^*$-algebra $UC(G)$ of all uniformly continuous functions $(G,\mathcal{L}\land\mathcal{R}) \to \mathbb{C}$, where $\mathcal{L}\land\mathcal{R}$ denotes the Roelcke uniformity on the group $G$ --- the infimum of the left uniformity $\mathcal{L}$ and the right uniformity $\mathcal{R}$, and $\mathbb{C}$ is endowed with the natural uniformity. Equivalently, $b_rG$ is the Samuel compactification of the uniform space $(G,\mathcal{L}\land\mathcal{R})$. A base for the Roelcke uniformity $\mathcal{L}\land\mathcal{R}$ on the group $G$ consists of coverings $\{OgO \;\vert\; g\in G\}$, where $O$ are neighbourhoods of the identity in $G$. Detailed information on uniform structures can be found in \cite{RD}.
	
	For a function $\phi\in C_b(G)$, we define its orbit $Orb(\phi) = \{\phi_g\;\vert\;g\in G\}$, where $\phi_g(h) = \phi(gh)$. A function $\phi\in C_b(G)$ is called \textit{weakly almost periodic} if its orbit $Orb(\phi)$ is a relatively weakly compact subset of $C_b(G)$. This means that the closure of the set $Orb(\phi)$ in the weak topology of the Banach space $C_b(G)$ is compact \cite{BergJungMiln}.
	
	The \textit{WAP compactification} $wG$ is the Gelfand space of the $C^*$-algebra $WAP(G)$ of all weakly almost periodic functions on $G$. While the Roelcke compactification is always proper ($G$ is homeomorphically densely embedded in $b_rG$), the WAP compactification may be improper ($G$ is continuously homomorphically mapped onto a dense subset of $wG$). The compactification $wG$ is the largest in the class of improper semitopological semigroup compactifications of $G$. However, in the case of interest to us --- the action of $G$ on a discrete $X$ --- the Ellis compactification $e_{\alpha X}G$ is a semitopological semigroup compactification \cite[Theorem 1]{GSorin26} ($\alpha X$ denotes the Alexandroff one-point compactification). That is, $G$ has a proper semitopological semigroup compactification, hence the compactification $wG$ ($\geq e_{\alpha X}G$) is proper.
	
	\medskip
	
	Topological spaces are assumed to be Tychonoff, and by an action we mean an effective left action. Compactifications are proper. In the notation of compactifications $(bX,b)$, we will omit the embedding $b:X\to bX$ and write simply $bX$. Equivalence of compactifications is denoted by the symbol $=$. In the case when two equivariant compactifications of a topological group $G$ have the same algebraic structure, the map between these compactifications will preserve this structure \cite[\S 3.3]{KozLeid-1}.
	
	Further, $X = (X,R)$ is an infinite ultrahomogeneous c.o. set in the discrete topology, and $G = (Aut(X,R),\tau_\partial)$ is its automorphism group in the permutation topology. As noted at the end of Section~\ref{section-intro}, $G$ is a topological group acting continuously on $X$.
	
	We follow the terminology from \cite{Engelking}. The closure of a set $A$ is denoted by $\overline{A}$. When considering graph compactifications of the form $b_XG = \left( \overline{i_X(G)}, i_X \right)$, for brevity instead of $i_X(g)$ we sometimes write $\Gamma(g)$ --- the graph of an element $g$ --- a closed subset of $X\times X$.
	
	For a uniform space $(Y,\mathcal{U})$, capital letters $U,V$ denote entourages of the diagonal (elements of the uniformity $\mathcal{U}$), and lowercase letters $u,v$ denote uniform coverings. The symbol $\mathcal{U}$ simultaneously denotes the family of entourages of the diagonal and the family of uniform coverings.

	\section{The Roelcke compactification of the group \texorpdfstring{$Aut(X,R)$}{Aut(X,R)}}

	To describe $b_rG$, we need $\beta_GX$ --- the maximal equivariant compactification of the $G$-space $X$.

	\begin{propos}\textup{\cite[Proposition 1]{GSorin26}}\label{prop-beta-G-X}
		$\beta_G X = c(X\otimes_l \{-1,0,1\})$.
	\end{propos}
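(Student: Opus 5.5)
The plan is to identify $\beta_GX$ through its function algebra. For a $G$-space $X$, the maximal equivariant compactification is the Gelfand space of the algebra $RUC_G(X)$ of bounded functions $f\colon X\to\Cb$ that are $\pi$-uniform: for every $\epsilon>0$ there is a neighbourhood $O$ of the identity with $|f(gx)-f(x)|<\epsilon$ for all $g\in O$ and all $x\in X$. Write $K=c(X\otimes_l\{-1,0,1\})$. I read $X\otimes_l\{-1,0,1\}$ as the lexicographic product, in which each $x$ is replaced by the triple $x^-<x<x^+$. I read $c(\cdot)$ as its completion as a cyclically ordered set, with the interval topology. It then suffices to prove two things: that $K$ is an equivariant compactification of $X$, and that $C(K)|_X=RUC_G(X)$.

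\textbf{Step 1: $K$ is an equivariant compactification.} First, $K$ is compact Hausdorff, as for Dedekind completions of dense orders. Second, $X$ sits in $K$ as the points $(x,0)$, and each is isolated, since the interval $(x^-,x^+)$ equals $\{x\}$. Third, $X$ is dense, since every nonempty interval of $K$ contains a point of $X$ by density of $R$. Next, the action extends by $g(x,i)=(gx,i)$, and to cuts by taking images of the cuts.

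For joint continuity at a pair $(g,\xi)$, take a basic interval $(a,b)\ni g\xi$ with $a,b\in X\otimes_l\{-1,0,1\}$. Let $F$ be the finite set of base points of $g^{-1}a$ and $g^{-1}b$. Then $gSt_F$ is a neighbourhood of $g$. Moreover, $St_F$ maps the interval $(g^{-1}a,g^{-1}b)$ onto itself, because $St_F$ fixes its endpoints. This gives continuity at $(g,\xi)$. As a consequence, $C(K)|_X\subset RUC_G(X)$, because a continuous action on a compact space is uniformly equicontinuous.

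\textbf{Step 2: every $f\in RUC_G(X)$ extends continuously to $K$.} Fix $\epsilon>0$, and take $O=St_F$ with $F$ finite, as given by $\pi$-uniformity of $f$. By ultrahomogeneity, the $St_F$-orbits in $X$ are the points of $F$ together with the open intervals between cyclically consecutive points of $F$. Hence $f$ oscillates by less than $\epsilon$ on each such interval.

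Now let $\xi\in K\sm X$. Then $\xi$ is either of the form $x^\pm$ or a gap. In each case $\xi$ has a neighbourhood whose trace on $X$ lies in a single complementary interval of $F$: for $x^+$, this is the interval from $x$ to the next point of $F$; for a gap, it is the interval of $F$ containing the gap. Therefore $f$ has oscillation at most $\epsilon$ near $\xi$. Since $\epsilon$ was arbitrary, the limit of $f$ at $\xi$ exists, and $f$ extends continuously.

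\textbf{Step 3: no coarser quotient is needed.} I must check that $C(K)|_X$ is not merely contained in $RUC_G(X)$ but realises all of $K$, that is, that $RUC_G(X)$ separates the points of $K\sm X$. The indicator of an interval $(x,y)$ of $X$ is $St_{\{x,y\}}$-invariant, hence belongs to $RUC_G(X)$. Such indicators separate $x^-$ from $x^+$, separate distinct gaps, and separate gaps from the points $x^\pm$. Hence the Gelfand space of $RUC_G(X)$ is exactly $K$.

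I expect the main obstacle to be the bookkeeping for the completion. One must list the types of points of $K$ precisely (the points $x$, the points $x^\pm$, and the genuine gaps), describe their neighbourhood bases, and verify joint continuity of the action at gap points. This is where ultrahomogeneity enters, through the description of the $St_F$-orbits.
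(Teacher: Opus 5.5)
Your argument is correct. The paper gives no proof of this statement: it is quoted from \cite{GSorin26}. The only trace of the original argument here is the use, in the proof of Theorem~\ref{theorem-roelcke-equals-beta}, of \cite[Lemma~1]{GSorin26}. That lemma says the clopen coverings $u_{\ovl{x}}=\{\{x_i\},(x_i,x_{i+1})\}_{i=1}^n$, with $\ovl{x}$ a cycle in $X$, form a base of the uniformity $\U_{\beta_GX}$. This suggests that the cited proof works on the uniform side, presumably identifying the maximal totally bounded equiuniformity on $X$ and its completion.

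You instead work on the Gelfand-dual side, with $RUC_G(X)$ and a candidate compactum $K$ given in advance. Both routes rest on the same combinatorial fact. Your Step~2 observation, that the $St_F$-orbits are exactly the points of $F$ and the intervals between consecutive points of $F$, says precisely that a bounded function on $X$ is $\pi$-uniform if and only if it is uniformly continuous for the coverings $u_{\ovl{x}}$.

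Each route has its advantage. Yours needs only the easy inclusion $C(bX)|_X\subset RUC_G(X)$ for equivariant $bX$, plus a concrete check on $K$; no completion has to be computed. The uniform route directly delivers the explicit base $u_{\ovl{x}}$ of $\U_{\beta_GX}$, which the paper uses in Proposition~\ref{prop-b_betaXG-description} and Theorem~\ref{theorem-roelcke-equals-beta}. In your setting you would still have to check that the partitions $\{\{x_i\},(x_i,x_{i+1})_K\}$ form a base of the uniformity of $K$. This is easy: the sets are clopen, since $(x_i,x_{i+1})_K$ has complement $(x_{i+1}^-,x_i^+)_K$.

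Two small remarks.

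First, Step~3 is superfluous. Steps~1 and~2 already give $C(K)|_X=RUC_G(X)$. Since $X$ is dense in $K$, restriction $C(K)\to C(K)|_X$ is an isometric isomorphism, so the Gelfand space is $K$ automatically. Equivalently, for any equivariant compactification $bX$ you get $C(bX)|_X\subset RUC_G(X)\subset C(K)|_X$, i.e.\ $K\geq bX$. There is no separate point-separation issue.

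Second, in Step~2 the phrase ``the next point of $F$'' degenerates when $|F|\leq 1$. Simply enlarge $F$ to a cycle of at least three points; this is allowed because $St_{F'}\subset St_F$ whenever $F'\supset F$.
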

	
	By $X\otimes_l \{-1,0,1\}$ we denote the lexicographic product of the c.o. set $X$ and the linearly ordered set $-1<0<1$ --- each point $x\in X$ is replaced by three consecutive points $x^-, x, x^+$, where $x^- \in X^- = X\times \{-1\}$, $x \in X = X\times \{0\}$, and $x^+ \in X^+ = X\times \{1\}$. The set $X\otimes_l \{-1,0,1\}$ is cyclically ordered. The c.o. set $c(X\otimes_l \{-1,0,1\})$ is obtained from $X\otimes_l \{-1,0,1\}$ by filling each gap with a single point. The space $\beta_G X$ is a cyclically ordered compactum \cite[Theorem 2.19]{Megreli23}; a base for its topology is formed by all possible intervals. The action $G\acts \beta_GX$ satisfies the properties $g(x^-) = (gx)^-$ and $g(x^+) = (gx)^+$ for all $x\in X$. For more details, see \cite{GSorin26, GSorin24}.
	
	\begin{lemma}\label{lemma-b-betaXG-necess-cond}
		Let $A\in b_{\beta_GX} G$. Then $A$ satisfies the following properties:
		\begin{enumerate}
			\item[1)] $\forall x\in \beta_GX \; (A[x] \neq \em \neq sA[x])$;
			\item[2)] $\forall x\in X \; (|A[x]| = 1 = |sA[x]|)$;
			\item[3)] \textit{Monotonicity}: $\forall x_1, x_2, x_3, y_1, y_2, y_3 \in \beta_GX : y_i \in A[x_i]\, (i=1,2,3) \; (x_1 \neq x_2 \neq x_3 \neq x_1 \land y_1\neq y_2 \neq y_3 \neq y_1 \implies ([x_1,x_2,x_3] \iff [y_1,y_2,y_3]))$;
			\item[4)] $\forall x_1 \neq x_2\in \beta_GX \; \forall y_1\in A[x_1], y_2\in A[x_2] \; ([y_1, y_2]\subset A[[x_1,x_2]])$.
		\end{enumerate}
	\end{lemma}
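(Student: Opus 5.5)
Throughout, $K=\beta_GX$, which by Proposition~\ref{prop-beta-G-X} is a cyclically ordered compactum whose topology has the open intervals as a base. Each $A\in b_{K}G$ is a Vietoris limit of graphs $\Gamma(g_\alpha)$ of automorphisms, so $A=\lim\Gamma(g_\alpha)$ in $CL(K\times K)$. The plan is to prove each of the four properties for graphs, observe that it survives Vietoris limits, and treat the points of $X$ separately using their special topological position in $K$. A useful fact about Vietoris convergence of closed sets in a compact space is that $(x,y)\in A$ if and only if there are $x_\alpha\to x$ with $g_\alpha x_\alpha\to y$ along a subnet; in particular $A$ is the upper Kuratowski limit of the graphs.

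\emph{Property 1).} Take $x\in K$. Then $(x,g_\alpha x)\in\Gamma(g_\alpha)$, and by compactness some subnet $g_\alpha x\to y$, so $(x,y)\in A$ and $A[x]\neq\em$. The same argument applied to $g_\alpha^{-1}$, which is legitimate because $s\Gamma(g)=\Gamma(g^{-1})$ and $s$ is a homeomorphism, gives $sA[x]\neq\em$.

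\emph{Property 3).} Suppose $y_i\in A[x_i]$ with the $x_i$ pairwise distinct and the $y_i$ pairwise distinct. Choose pairwise disjoint interval neighbourhoods $U_i\ni x_i$ and $V_i\ni y_i$. For $\alpha$ large, the set $\Gamma(g_\alpha)$ meets each $U_i\times V_i$, giving points $x_i^\alpha\in U_i$ with $g_\alpha x_i^\alpha\in V_i$. Each $g_\alpha$ preserves the cyclic order of $K$, so $[x^\alpha_1,x^\alpha_2,x^\alpha_3]\iff[g_\alpha x^\alpha_1,g_\alpha x^\alpha_2,g_\alpha x^\alpha_3]$. Since the neighbourhoods are disjoint intervals, the cyclic orientation of any triple of representatives is constant. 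Indeed, in a cyclically ordered compactum the orientation of a triple is locally constant on triples of distinct points; this is the step to check carefully from the interval base. So the orientation of $(x^\alpha_i)$ equals that of $(x_i)$, and likewise for the $y$'s, which gives monotonicity.

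\emph{Property 4).} Let $y_1\in A[x_1]$, $y_2\in A[x_2]$ and $z\in[y_1,y_2]$; we want $z\in A[[x_1,x_2]]$. The endpoints are covered trivially. Otherwise pick approximating points $x_i^\alpha\to x_i$ with $g_\alpha x_i^\alpha\to y_i$. Since $g_\alpha$ is an order automorphism of $K$ with $g_\alpha(x)^\pm=(g_\alpha x)^\pm$, it maps $[x_1^\alpha,x_2^\alpha]$ onto $[g_\alpha x_1^\alpha,g_\alpha x_2^\alpha]$. Eventually this segment contains $z$, because $z$ lies strictly inside and the endpoints converge to $y_1,y_2$; the degenerate case $y_1=y_2$ has to be excluded or handled separately. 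Thus $z=g_\alpha w_\alpha$ with $w_\alpha\in[x_1^\alpha,x_2^\alpha]$. A cluster point $w$ lies in $[x_1,x_2]$ by closedness of segments in the limit, and $(w,z)\in A$.

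\emph{Property 2).} This is the main obstacle. For $x\in X$, the point $x$ is isolated in the ordered set only in the sense that its immediate neighbours $x^-<x<x^+$ in $K$ are adjacent. Hence $\{x\}=(x^-,x^+)$ is open in $K$, and $x$ is an isolated point of $K$; the same holds for every $y\in X$. Suppose $y\neq y'$ both lie in $A[x]$. Because $\{x\}$ is open, Vietoris convergence forces eventually $g_\alpha x\in V$ and $g_\alpha x\in V'$ for disjoint neighbourhoods $V\ni y$, $V'\ni y'$, which is a contradiction. So $|A[x]|\le1$, and with 1) we get $|A[x]|=1$; symmetrically $|sA[x]|=1$. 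The delicate points are confirming that $\{x\}$ is indeed open in $K$, that is, that no gap-point sits between $x^-$ and $x$, and making the step from "$\Gamma(g_\alpha)\cap(\{x\}\times V)\neq\em$ eventually" to a net statement precise.
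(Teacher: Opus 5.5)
Your proof is correct. For 2) and 3) it is the paper's argument, rewritten with a net $\Gamma(g_\alpha)\to A$ instead of picking one $g$ whose graph lies in a basic Vietoris neighbourhood of $A$.

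\begin{itemize}
\item In 2), the open set $(\{x\}\times V)^-\cap(\{x\}\times V')^-$ contains $A$ but contains no graph.
\item In 3), pairwise disjoint convex neighbourhoods freeze the orientation of representatives. The step you flag needs only this: if $t_1,t_1'\in U_1$ gave opposite orientations with some $t_2\in U_2$, $t_3\in U_3$, then $t_2$ and $t_3$ would separate $t_1$ from $t_1'$ on both sides. That contradicts convexity of $U_1$.
\end{itemize}

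The points you call delicate in 2) are not real obstacles. The points $x^-,x,x^+$ are consecutive, and new points are inserted only into gaps, i.e.\ cuts with no last element below and no first element above. Hence $(x^-,x)=(x,x^+)=\varnothing$ and $\{x\}=(x^-,x^+)$ is open. The paper uses this tacitly, e.g.\ in the covers $u$ of Proposition~\ref{prop-b_betaXG-description}. The statement ``eventually $\Gamma(g_\alpha)$ meets $\{x\}\times V$'' is just the definition of Vietoris convergence. It yields $g_\alpha x\in V$, because $(x,g_\alpha x)$ is the only point of $\Gamma(g_\alpha)$ lying over $x$.

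You differ from the paper in 1) and 4).

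\begin{itemize}
\item \textbf{Property 1).} The paper only cites [Usp2]. Your compactness argument is a valid self-contained substitute: a cluster point of $(x,g_\alpha x)$ lies in the limit, and the same applies to $g_\alpha^{-1}$ via $s$.
\item \textbf{Property 4).} You reprove the inclusion by a direct limit argument: approximating pairs for both $(x_i,y_i)$ chosen along a common subnet, pulling $z$ back to $w_\alpha\in(x_1^\alpha,x_2^\alpha)$, and placing a cluster point in $[x_1,x_2]$ by local constancy of orientation. This checks out. The case $y_1=y_2$ is trivial, since then $[y_1,y_2]=\{y_1\}$.
\end{itemize}

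The paper instead gets 4) from 1) and 3) in one line. For $z\in(y_1,y_2)$, property 1) gives some $p$ with $z\in A[p]$. If $p\in(x_2,x_1)$, then $[x_1,x_2,p]$ together with $[y_1,z,y_2]$ contradicts monotonicity. That deduction needs no subnet selection at all. So once 1) and 3) are established, your net argument for 4) is redundant rather than wrong, and the paper's route is the economical one.
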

	
	\begin{proof}
		An embedding $i_{\beta_GX} : G\to CL(\beta_GX\times \beta_GX)$ is defined. Let $A \in \ovl{i_{\beta_GX}(G)} = b_{\beta_GX} G$. We prove that properties 1--4 hold.
		
		Property 1 holds for graph compactifications; see \cite{Usp2}.
		
		2) Suppose $\exists x\in X : |A[x]| \geq 2$. Then there exist distinct $y, z\in A[x]$. There exist neighbourhoods $O_y\ni y$ and $O_z \ni z$ such that $O_y \cap O_z = \em$. For $W_y = \{x\}\times O_y$ and $W_z = \{x\}\times O_z$, we have $A\in W_y^- \cap W_z^-$. Since $A \in \ovl{i_{\beta_GX}(G)}$, there exists $g\in G$ such that $\Gamma(g) \in W_y^- \cap W_z^-$, i.e., $g(x)\in O_y\cap O_z \neq \em$. This is a contradiction.
		
		3) Let $y_i \in A[x_i], (i=1,2,3)$, and $x_1 \neq x_2 \neq x_3 \neq x_1 \land y_1\neq y_2 \neq y_3 \neq y_1$. We prove $[y_1,y_2,y_3] \implies [x_1,x_2,x_3]$.
		
		Suppose the contrary: $[y_1,y_2,y_3] \land \lnot [x_1,x_2,x_3]$. Since $x_1 \neq x_2 \neq x_3 \neq x_1$, we have $[x_1,x_3,x_2]$. Then there exist pairwise disjoint convex neighbourhoods $O_{x_1}, O_{x_2}, O_{x_3}$ of the points $x_1,x_2,x_3$, respectively. Moreover, for any $t_i \in O_{x_i}, (i=1,2,3)$, we have $[t_1,t_3,t_2]$. Similarly, for the points $y_1,y_2,y_3$, we choose pairwise disjoint convex neighbourhoods $O_{y_1}, O_{y_2}, O_{y_3}$. Then, since $[y_1,y_2,y_3]$, for any $r_i \in O_{y_i}, (i=1,2,3)$, we have $[r_1,r_2,r_3]$. For each $i = 1,2,3$, we have $A\cap (O_{x_i}\times O_{y_i}) \neq \em$. Since $A \in \ovl{i_{\beta_GX}(G)}$, there exists $g\in G$ such that $\forall i =1,2,3 \;\; \Gamma(g) \cap (O_{x_i}\times O_{y_i}) \neq \em$. Consequently, there exist points $t_i \in O_{x_i}\cap X, (i=1,2,3)$ such that $gt_i \in O_{y_i}, (i=1,2,3)$. Then $[t_1,t_3,t_2]$ and $[gt_1,gt_3,gt_2]$ (since $g$ is an automorphism). This contradicts $[r_1,r_2,r_3]$ for any $r_i \in O_{y_i}, (i=1,2,3)$.
		
		The statement $[x_1,x_2,x_3] \implies [y_1,y_2,y_3]$ is proved similarly.

		4) Suppose there exists $z\in (y_1, y_2)$ such that $z\notin A[[x_1, x_2]]$. By property 1, there exists $p\in \beta_GX\sm [x_1, x_2] = (x_2, x_1)$ such that $A[p] \ni z$. Then $[x_1, x_2, p]$ and $[y_1, z, y_2]$ contradict property 3.
	\end{proof}

	\begin{remark}
		Every $A\in b_{\beta_GX} G$ satisfies the property:
		5) $\forall t\in \beta_GX\sm X \; \forall x\in X \; (x\in A[t] \implies x^-, x^+ \in A[t]) \land (x\in sA[t] \implies x^-, x^+ \in sA[t])$.
	\end{remark}

	\begin{propos}\label{prop-b_betaXG-description}
		The compactification $b_{\beta_GX} G$ is a subspace\\
		$\{A\in CL(\beta_GX\times \beta_GX) \;\vert\; A \text{ satisfies properties 1--3}\}$ of the space $(CL(\beta_GX\times \beta_GX), \tau_V)$.
	\end{propos}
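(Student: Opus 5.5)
One inclusion is already in hand: by Lemma \ref{lemma-b-betaXG-necess-cond}, every $A\in b_{\beta_GX}G$ satisfies properties 1--3. It therefore remains to show the converse. Let $A\in CL(\beta_GX\times\beta_GX)$ satisfy 1--3, and let $[V_1,\dots,V_n]$ be a basic Vietoris neighbourhood of $A$. We must find $g\in G$ with $\Gamma(g)\in[V_1,\dots,V_n]$.

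First simplify the neighbourhood. Since $A$ is compact and $\beta_GX$ is a cyclically ordered compactum with intervals as a base, we may shrink the $V_i$ and assume each is a finite union of products of open intervals. More concretely, choose finitely many points $(a_k,b_k)\in A$, $k=1,\dots,m$, and boxes $I_k\times J_k$ of open intervals around them, such that:
\begin{itemize}
\item the boxes cover $A$;
\item every $V_i$ contains some box $I_k\times J_k$ whose centre lies in $A\cap V_i$;
\item each box lies in some $V_i$.
\end{itemize}
It then suffices to build $g\in G$ with $\Gamma(g)\subset\bigcup_k I_k\times J_k$ and $\Gamma(g)\cap(I_k\times J_k)\neq\em$ for all $k$.

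Next, use properties 1 and 3 to show that $A$ behaves like a ``monotone correspondence'' of $\beta_GX$ onto itself. Property 3 implies that the points $a_1,\dots,a_m$ can be listed in cyclic order so that the corresponding $b_k$ are cyclically weakly monotone, with repetitions allowed when several $a_k$ coincide or several $b_k$ coincide. I would then pick points $x_k\in I_k\cap X$ and $y_k\in J_k\cap X$ forming two cycles in the same cyclic pattern. This is possible because $X$ is dense in $\beta_GX$ and the order on $X$ is dense: where $a_k=a_l$ (a ``vertical'' part of $A$), choose distinct $x$'s very close together inside the common interval, and symmetrically for coinciding $b$'s. Ultrahomogeneity then gives $g\in G$ with $gx_k=y_k$, so $\Gamma(g)$ meets every box.

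The main obstacle is ensuring $\Gamma(g)\subset\bigcup_k I_k\times J_k$ on the arcs between consecutive $x_k$. For this, refine the choice of points so that the consecutive boxes overlap in a staircase. Between consecutive chosen points $(a_k,b_k)$ and $(a_{k+1},b_{k+1})$, use property 1 (and the connectedness-like property 4, which follows from 1 and 3) to insert enough intermediate points of $A$ that the union of boxes covers the rectangle $[x_k,x_{k+1}]\times[y_k,y_{k+1}]$. Then, since $g$ maps $[x_k,x_{k+1}]$ into $[y_k,y_{k+1}]$, the whole graph lies in the union of boxes.

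Compactness of $A$ guarantees that finitely many boxes suffice. Care is needed at points of $\beta_GX\setminus X$ and at $x^\pm$, which are the vertical or horizontal segments permitted by Remark 5. Since at those places the interval boxes already contain full stretches of points of $X$ on the appropriate side, the same staircase argument applies. Finally, $CL(\beta_GX\times\beta_GX)$ is compact and Hausdorff, so the closure taken in the subspace agrees with the compactification, which completes the identification.
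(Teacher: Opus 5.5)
\textbf{There is a genuine gap: the central step is only asserted, and two specific points of the sketch fail as written.}

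Your overall plan matches the paper's: trace $A$ as a cyclically monotone ``staircase'', pick two cycles $(x_k)$, $(y_k)$ in $X$ along it, and apply ultratransitivity. The problem is the step that carries all the weight. You claim that the centres can be listed cyclically with both coordinates weakly monotone, refined so that consecutive pieces are small. This is exactly what the paper proves with the numbering (b1)--(b6) and the verifications I and II, including the one cell that must be traversed twice (as for $A=C_z^z$).

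Your sketch also never uses property 2, and it cannot be avoided. Take $A=\{x\}\times\beta_GX\cup\beta_GX\times\{y\}$ with $x,y\in X$. It satisfies properties 1 and 3 (the latter vacuously), but it is not in $b_{\beta_GX}G$. For this $A$ your argument breaks at ``where $a_k=a_l$, choose distinct $x$'s very close together inside the common interval''. Points of $X$ are isolated in $\beta_GX$, and the neighbourhood can force every box at $a_k$ to be $\{a_k\}\times J$. Your last paragraph alludes to vertical segments occurring only over $\beta_GX\setminus X$. That fact is property 2, and it must be invoked explicitly in the listing step.

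The containment mechanism is also wrong as stated. Take $A=\Gamma(f)$ with $f\in G$ and $w\in X$. Let $V_1=\{(w,fw)\}$, which is open since $w$ and $fw$ are isolated, and choose the remaining $V_i$ disjoint from $\{w\}\times\beta_GX$. Then the box at $(w,fw)$ must be $\{w\}\times\{fw\}$, which forces $x_k=w$ and $y_k=fw$. Since $y_{k+1}\neq y_k$, the point $(w,y_{k+1})$ lies in $[x_k,x_{k+1}]\times[y_k,y_{k+1}]$ but in no $V_i$. No choice of intermediate points of $A$ changes this.

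What you actually need, and what suffices, is weaker: the corner points $(x_k,y_k)$ and the \emph{open} rectangles $(x_k,x_{k+1})\times(y_k,y_{k+1})$ must lie in the union. This works because $g$ maps $(x_k,x_{k+1})$ onto $(y_k,y_{k+1})$ and $\beta_GX=\bigcup_k\bigl(\{x_k\}\cup(x_k,x_{k+1})\bigr)$.

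The paper avoids both difficulties by using $\dim\beta_GX=0$ to replace arbitrary interval boxes with the clopen grid $u^2$, where $u=\{\{a_t\},(a_t,a_{t+1})\}$ for a cycle $(a_t)$ in $X$. This reduces the construction to finite combinatorics on the cells meeting $A$, and properties 1--4 control each step of the traversal there. Adopting that grid, and proving the listing rather than asserting it, is the natural way to make your plan rigorous.
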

	
	\begin{proof}
	By Lemma~\ref{lemma-b-betaXG-necess-cond}, every element of $b_{\beta_GX} G$ satisfies properties 1--4. We show that every element of $CL(\beta_GX\times \beta_GX)$ satisfying properties 1--4 lies in $b_{\beta_GX} G = \ovl{i_{\beta_GX} (G)}$.
		
	Let $A \in [\W] \in \tau_V$. Since $\dim \beta_GX = 0$, we may assume that $\W = \{W_1, \dots, W_m\}$, where $W_i = U_j\times U_k$. Here $U_j, U_k \in u = \{(a_t, a_{t+1}), \{a_t\} \}_{t=1}^n$, where $(a_1, \dots, a_n)$ is a cycle in $X$ (we may assume $n\geq 3$). The family $\W$ consists of all elements of the disjoint covering $u^2 = \{U_i\times U_j\;\vert\; U_i, U_j\in u\}$ that intersect $A$.

	Each element of $u^2$ has one of four types: a point $\{a_i\}\times \{a_j\}$; a vertical interval $\{a_i\}\times (a_j, a_{j+1})$; a horizontal interval $(a_i, a_{i+1})\times \{a_j\}$; a rectangle $(a_i, a_{i+1})\times (a_j, a_{j+1})$.
	
	We renumber the elements of $\W$ as follows:
	
	a) assign number 1 to the element of $\W$ of the form $\{a_1\}\times V$ for some $V\in u$. By property 1, such an element exists; by property 2, it is unique. $W_1 = \{a_1\}\times V$;
	
	b) if number $k$ has already been assigned, assign number $k+1$:\\	
	b1) if $W_k = \{a_i\}\times (a_j, a_{j+1})$, then $W_{k+1} = (a_i, a_{i+1})\times (a_j, a_{j+1})$;\\	
	b2) if $W_k = \{a_i\}\times \{a_j\}$, then $W_{k+1} = (a_i, a_{i+1})\times (a_j, a_{j+1})$;\\	
	b3) if $W_k = (a_i, a_{i+1})\times \{a_j\}$, then $W_{k+1} = (a_i, a_{i+1})\times (a_j, a_{j+1})$;\\	
	b4) if $W_k = (a_i, a_{i+1})\times (a_j, a_{j+1})$ and $\{a_{i+1}\}\times \{a_{j+1}\} \in \W$, then $W_{k+1} = \{a_{i+1}\}\times \{a_{j+1}\}$;\\	
	b5) if $W_k = (a_i, a_{i+1})\times (a_j, a_{j+1})$ and exactly one of the intervals $\{a_{i+1}\}\times (a_j, a_{j+1})$, $(a_i, a_{i+1})\times \{a_{j+1}\}$ lies in $\W$, then let $W_{k+1}$ be this interval ($\in \W$);\\	
	b6) if $W_k = (a_i, a_{i+1})\times (a_j, a_{j+1})$ and both intervals $\{a_{i+1}\}\times (a_j, a_{j+1})$, $(a_i, a_{i+1})\times \{a_{j+1}\}$ lie in $\W$, then set $W_{k+1} = (a_i, a_{i+1})\times \{a_{j+1}\}$ if this interval has not yet been numbered, and $W_{k+1} = \{a_{i+1}\}\times (a_j, a_{j+1})$ otherwise. In case (b6), the set $(a_i, a_{i+1})\times (a_j, a_{j+1})$ will be numbered twice.
	
	(Example: $A = \{z\} \times \beta_GX \cup \beta_GX\times \{z\}$ for some $z\in \beta_GX\sm X$.)
	
	The numbering ends when $W_{k+1} = W_1$ (the set $W_1$ is not numbered a second time).
	
	\medskip
	
	\textbf{Justification}
	
	\textbf{I.} At each step, the numbering is unambiguous, $(W_k\in \W \implies W_{k+1} \in \W)$, and the process terminates.
	
	Case (b1). Let $W_k = \{a_i\}\times (a_j, a_{j+1})$. Then $W_{k+1} = (a_i, a_{i+1})\times (a_j, a_{j+1}) \in \W$, since otherwise $A[a_i^+] \cap (a_j, a_{j+1}) = \em$. This contradicts property 4 (since the interval $(A[a_i], a_{j+1}) \neq \em$ must be contained in $A[a_i^+]$).
	
	Similarly, in cases (b2) and (b3), we have $W_{k+1}\in \W$.
	
	Cases (b4), (b5), (b6). Let $W_k = (a_i, a_{i+1})\times (a_j, a_{j+1})$, so $\exists (x,y)\in A \cap W_k$. Suppose $A$ does not intersect any of the sets $\{a_{i+1}\}\times \{a_{j+1}\}$, $\{a_{i+1}\}\times (a_j, a_{j+1})$, $(a_i, a_{i+1})\times \{a_{j+1}\}$. By property 1, $\exists b\in A[a_{i+1}]$, $\exists c\in sA[a_{j+1}]$. Then $c\in (a_{i+1}, a_i]$, $x\in (a_i, a_{i+1})$, hence $[x, a_{i+1}, c]$. Similarly, $[y, a_{j+1}, b]$. By property 3, $[x, a_{i+1}, c]$ implies $[y, b, a_{j+1}]$. Contradiction. Thus, at least one of cases (b4), (b5), (b6) holds and $W_{k+1}\in \W$.
	
	In case (b4), by property 2, cases (b5) and (b6) cannot occur. Therefore, when $W_k = (a_i, a_{i+1})\times (a_j, a_{j+1})$, exactly one of cases (b4), (b5), (b6) occurs.

	In case (b6), we have $A[(a_i, a_{i+1})] \supset \beta_GX\sm (a_j, a_{j+1})$ and $sA[(a_j, a_{j+1})] \supset \beta_GX\sm (a_i, a_{i+1})$. Indeed, otherwise, by property 1, there exists $(x, y)\in A$ such that $x\notin (a_i, a_{i+1})$ and $y\notin (a_j, a_{j+1})$. By property 2, $x\neq a_{i+1}$ and $y\neq a_{j+1}$. There exist $b\in A[a_{i+1}]\cap (a_j, a_{j+1})$ and $c\in sA[a_{j+1}]\cap (a_i, a_{i+1})$. Then $[b, a_{j+1}, y]$ and $[c, a_{i+1}, x]$. From the latter, by property 3, we get $[a_{j+1}, b, y]$. This contradiction proves the inclusions.
	
	Therefore, in case (b6), the family $\W$, together with the rectangle $W_k = (a_i, a_{i+1})\times (a_j, a_{j+1})$, contains all four adjacent intervals and all sets of the form $(a_i, a_{i+1})\times V$, where $V\in u$. Then the rectangle will be assigned two numbers: $k$ and $k+2n$ (the covering $u$ contains $2n$ elements).
	
	At each step, it is possible to proceed to the next one. Since $\W$ is finite and each element is numbered at most twice, the process will terminate.
	
	\textbf{II.} All elements of $\W$ are numbered.
	
	Suppose there exists $U\times V\in \W$ that was not numbered. From the numbering algorithm, it follows that there exist $U', V'\in u$ such that the sets $U\times V'$ and $U'\times V$ (in $\W$) have been numbered. If $U$ or $V$ is of the form $\{a_i\}$, we obtain a contradiction with property 2.
	
	If $U\times V = (a_i, a_{i+1})\times (a_j, a_{j+1})$, then, arguing similarly to part I, one can show that one of the preceding sets --- $\{a_i\}\times (a_j, a_{j+1})$, $(a_i, a_{i+1})\times \{a_j\}$, or $\{a_i\}\times \{a_j\}$ --- also lies in $\W$. Due to the unambiguity of each step, this preceding set was also not numbered. This contradicts what was proven above.
	
	Thus, every element of $\W$ is numbered, and the rectangle from case (b6) (if it exists) is numbered twice. The number of rectangles from (b6) is $\leq 1$ (this follows from the inclusions $A[(a_i, a_{i+1})] \supset \beta_GX\sm (a_j, a_{j+1})$ and $sA[(a_j, a_{j+1})] \supset \beta_GX\sm (a_i, a_{i+1})$ and property 2).
	
	\textbf{Existence of $g\in G$ with the property $\Gamma(g) \in [\W]$.}
	
	Let the resulting numbering of the family $\W$ be $W_1, \dots, W_l$. We show that for each $i=1, \dots, l$, there exists $(x_i, y_i)\in X^2\cap W_i$, such that $[x_1, \dots, x_l]$ and $[y_1, \dots, y_l]$ (if some $W_k$ is numbered twice, then two points will be chosen in it).
	
	Indeed, $W_k = U_k\times V_k$ is a product of intervals. Set $x_1 = a_1 \in U_1 = \{a_1\}$, and either $y_1 = a_j$ if $V_1 = \{a_j\}$, or $y_1 \in X\cap V_1 = X\cap (a_j, a_{j+1})$ --- an arbitrary point. We have $U_2 = (a_1, a_2)$, $V_2 = (a_j, a_{j+1})$. Then choose $x_2 \in U_2 \cap X$ and $y_2 \in (y_1, a_{j+1})\cap X$.
	
	Each subsequent $x_i$ is an arbitrary element of the set $U_i \cap (x_{i-1}, x_1) \cap X$, and $y_i \in V_i \cap (y_{i-1}, y_1)\cap X$ (such elements exist due to the density of the order).

	Due to the ultra-transitivity of the action $G\acts (X,R)$, there exists $g\in G$ such that $gx_i = y_i$ for $i=1, \dots, l$. Consequently, $\forall i \; \Gamma(g)\cap W_i \neq \em$.
	
	Also, $\Gamma(g) \subset \bigcup\limits_{i} W_i$. Indeed, otherwise there exists $U\times V \in u^2$ such that $\Gamma(g) \cap U\times V\neq \em$ and $U\times V \notin \W$. Since $\Gamma(g)$ satisfies properties 1--4, applying the reasoning from part II to it yields a contradiction with property 2.

	Thus, $\Gamma(g)\in [\W]$.
\end{proof}

	We proceed to prove the equality $b_rG = b_{\beta_GX}G$. We use the following
	
	\begin{lemma}\textup{\cite[Theorem 3.35]{KozLeid-2}}\label{lemma-reolcke-equality}
		Let the action $(G, \tau_{co})\acts K$ of a topological group on a compact space $K$ be continuous. Then the following conditions are equivalent:
		\begin{enumerate}
			\item[\textup{1.}] $\begin{aligned}[t]
				&\forall U\in \U_K \;\exists V\in \U_K : (i_K(f), i_K(h)) \in CL(V\times V), \; f,h\in G \implies &\\
				&\exists g\in G : \forall x\in K \; (f(x),g(x))\in U, \; (g^{-1}(x), h^{-1}(x)) \in U &
			\end{aligned}$
			\item[\textup{2.}] $G$ is Roelcke-precompact and $b_rG = b_KG$.
		\end{enumerate}
	\end{lemma}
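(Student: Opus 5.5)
The plan is to reduce both conditions to one statement about uniformities on $G$: the Roelcke uniformity $\L\land\R$ coincides with the uniformity $\U_\Gamma$ that $G$ inherits from the compact space $CL(K\times K)$ via $i_K$. Recall that for a compact $K$ with unique uniformity $\U_K$, the Vietoris topology on $CL(K\times K)$ is induced by the Hausdorff uniformity. Its basic entourages are $CL(V\times V)$, $V\in\U_K$, where $(A,B)\in CL(V\times V)$ means that every point of $A$ is $V\times V$-close to a point of $B$ and vice versa. So $\U_\Gamma$ has the base $\{(f,h)\;\vert\;(i_K(f),i_K(h))\in CL(V\times V)\}$, and it is totally bounded because $CL(K\times K)$ is compact.

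\textbf{Step 1: $\U_\Gamma\subset \L\land\R$ always.} In $\tau_{co}$ the sets $O_U=\{g\;\vert\;\forall x\in K\;(x,g(x))\in U\}$ form a neighbourhood base at the identity. Hence $\L\land\R$ has a base of entourages of the form: $f\sim h$ iff some $g$ satisfies $(f(x),g(x))\in U$ and $(g^{-1}(x),h^{-1}(x))\in U$ for all $x$. The first condition moves each point $(x,f(x))$ of $\Gamma(f)$ into $U$-proximity (in the second coordinate) of $(x,g(x))\in\Gamma(g)$. The second condition, read on $\Gamma(g)=\{(g^{-1}(y),y)\}$ and $\Gamma(h)=\{(h^{-1}(y),y)\}$, moves it in the first coordinate. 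Composing, $\Gamma(f)$ and $\Gamma(h)$ are $(U\circ U)$-close in the Hausdorff sense. I will check that each elementary move is symmetric, so both halves of the Hausdorff condition hold. Given $V$, choosing $U$ with $U\circ U\subset V$ shows that every $\U_\Gamma$-entourage is a Roelcke entourage.

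\textbf{Step 2: (1) $\Rightarrow$ (2).} Condition (1) says exactly that every basic Roelcke entourage contains a $\U_\Gamma$-entourage, so together with Step 1 we get $\L\land\R=\U_\Gamma$. Since $\U_\Gamma$ is totally bounded, $G$ is Roelcke-precompact. For a totally bounded uniformity, the Samuel compactification equals the completion. The completion of $(G,\U_\Gamma)$ is the closure of $i_K(G)$ in the complete (compact) uniform space $CL(K\times K)$, namely $b_KG$. Hence $b_rG=b_KG$.

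\textbf{Step 3: (2) $\Rightarrow$ (1).} If $G$ is Roelcke-precompact, then $b_rG$ is the completion of $(G,\L\land\R)$, and the uniformity $G$ inherits from $b_rG$ is precisely $\L\land\R$. Equivalence $b_rG=b_KG$ gives a homeomorphism fixing $G$, and this is a uniform isomorphism because the spaces are compact. So $\L\land\R$ equals the trace of the unique uniformity of $b_KG$, which is $\U_\Gamma$. Unwinding this equality for a basic Roelcke entourage indexed by $U$ yields the $V$ required in (1).

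The main obstacle I expect is Step 1: verifying carefully that the two one-sided closeness conditions really produce a two-sided Hausdorff closeness of graphs. This needs attention to which coordinate moves and to the use of $\tau_{co}=$ uniform convergence on compact $K$. A secondary care point is identifying the basic Roelcke entourages $OgO$ with the formulation via $g$ in (1), including the choice between $f^{-1}g$ and $gf^{-1}$ conventions.
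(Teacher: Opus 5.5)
The paper gives no proof of this lemma. It is quoted from \cite[Theorem 3.35]{KozLeid-2} and only used in the form $(\star)$ in the proof of Theorem~\ref{theorem-roelcke-equals-beta}, so there is no in-paper argument to compare with. Judged on its own, your proof is correct. The route of showing that both conditions say $\L\land\R=\U_\Gamma$ is the natural one, and each step rests on standard facts: $\tau_{co}$ is uniform convergence on a compactum; the sets $CL(V\times V)$ form a base of the uniformity of $CL(K\times K)$ (Engelking); for a totally bounded uniformity the Samuel compactification is the completion. You also need injectivity of $i_K$, which comes from effectiveness of the action, so that the closure of $i_K(G)$ really is the completion of $(G,\U_\Gamma)$.

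The two points you flagged do go through.
\begin{itemize}
\item \emph{Base of $\L\land\R$.} Put $O_U=\{k\in G \;\vert\; (y,k(y))\in U \ \forall y\in K\}$; these sets form a neighbourhood base at $e$. The existence of $g$ in condition~1 means exactly $gf^{-1}\in O_U$ and $h^{-1}g\in O_U$, i.e.\ $h\in O_UfO_U^{-1}$. Since $OfO\subset O_UfO_U^{-1}\subset WfW$ for $O=O_U\cap O_U^{-1}$ and $O_U\subset W\cap W^{-1}$, these relations form a base of $\L\land\R$, and the left/right conventions cause no trouble.
\item \emph{Step 1.} Take $U$ symmetric. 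The chain $(x,f(x))\to(x,g(x))\to(h^{-1}g(x),g(x))$ moves first the second coordinate and then the first, each by $U$. The reversed chain, starting from a point $(h^{-1}(y),y)$ of $\Gamma(h)$, gives the other half of the Hausdorff condition. The difference between entourage neighbourhoods and stars of coverings in the definition of $CL(V\times V)$ only costs passing from $V$ to $V\circ V^{-1}$.
\end{itemize}

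Compared with the bare citation, your argument makes visible what each hypothesis does. Step 1 shows $b_KG\le b_rG$ for every continuous action on a compactum, so condition~1 is precisely the missing reverse inequality. Roelcke-precompactness in condition~2 is genuinely used in Step 3: without it, the uniformity that $b_rG$ induces on $G$ is only the precompact reflection of $\L\land\R$. In that case $b_rG=b_KG$ would not yield $\L\land\R\subset\U_\Gamma$.
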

	
	Here $\U_K$ is the unique uniformity on the compact space $K$; $U, V$ are entourages of the diagonal in $K\times K$; $CL(V\times V) = \{(A,B) \in CL(K\times K)\times CL(K\times K) \;\vert\; B\subset st(A,v), \; A\subset st(B,v) \}$ is an entourage of the diagonal in $CL(K\times K)\times CL(K\times K)$ generated by $V$; $v$ denotes the uniform covering corresponding to the entourage $V$, i.e. $v = \{ V[x] \;\vert\; x\in K \}$, where $V[x] = \{y\in K \;\vert\; (x,y)\in V\}$; $st(A,v)$ denotes the star of the set $A$ with respect to the covering $v$.
	
	The family of entourages $\{CL(V\times V) \;\vert\; V\in \U_K\}$ forms a base of the unique uniformity on $CL(K\times K)$ (see \cite[Problem 8.5.16]{Engelking}).
	
	In terms of uniform coverings, condition 1 of Lemma~\ref{lemma-reolcke-equality} takes the form:
	\begin{align*}\label{condition-1}
		&\forall u\in \U_K \;\exists v\in \U_K : \forall f,h\in G: (\Gamma(f),\Gamma(h))\in CL(v\times v) \; \exists g\in G:\\ \tag{$\star$}
		&\text{the following conditions hold}\\  
		&{\rm I)} \; \forall x\in K \; fx, gx \text{ belong to the same element of the covering } u,\\  
		&{\rm II)} \; \forall x\in K \; g^{-1}x, h^{-1}x \text{ belong to the same element of the covering } u.
	\end{align*}
	
	\begin{theorem}\label{theorem-roelcke-equals-beta}
		$b_rG = b_{\beta_GX}G$.
	\end{theorem}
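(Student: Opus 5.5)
We will verify condition $(\star)$ of Lemma~\ref{lemma-reolcke-equality} for $K=\beta_GX$. The group action $G\acts\beta_GX$ is continuous, and by fact 2) of Section~\ref{section-intro} the topology $\tau_{co}$ induced on $G$ coincides with $\tau_\partial$. So it suffices to check $(\star)$, and we may restrict to cofinal uniform coverings. Since $\beta_GX$ is a zero-dimensional cyclically ordered compactum, the coverings $u=\{(a_t,a_{t+1}),\{a_t\}\}_{t=1}^n$, with $(a_1,\dots,a_n)$ a cycle in $X$, are cofinal in $\U_K$. These are the same coverings used in the proof of Proposition~\ref{prop-b_betaXG-description}.

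Given such a $u$, we take $v=u$. Then $(\Gamma(f),\Gamma(h))\in CL(v\times v)$ means the following. Each $\Gamma(f)$ is contained in the star of $\Gamma(h)$ with respect to $u^2$, and conversely. Because $u$ is a disjoint covering, this simply says that $\Gamma(f)$ and $\Gamma(h)$ meet exactly the same elements of $u^2$. Equivalently, $\Gamma(f),\Gamma(h)\in[\W]$ for the same family $\W\subset u^2$.

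Condition I) requires $fx$ and $gx$ to lie in the same cell of $u$ for all $x\in\beta_GX$. By continuity and density of $X$, together with the fact that the cells are clopen, it is enough to require this for $x\in X$. This holds if $g^{-1}(a_t)\in f^{-1}$(the same cell as $f^{-1}$ of... ) — more simply, we require
\[
g\bigl(f^{-1}(U)\bigr)=U\quad\text{for every } U\in u,
\]
that is, $g f^{-1}$ preserves each cell of $u$. Condition II) requires $g^{-1}y$ and $h^{-1}y$ to lie in the same cell of $u$. This holds if $g^{-1}(V)\subset$ the same cell as $h^{-1}(V)$, i.e. $g h^{-1}$ maps each cell $V\in u$ into $V$.

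So we need a single $g$ with $g(x)\in$ cell$(f x)$ and $g(x)\in$ cell of $u$ determined so that $h^{-1}(gx)$ lies in cell$(x)$. In other words, for every $x\in X$ the pair $(\mathrm{cell}(x),\mathrm{cell}(gx))$ must equal both $(\mathrm{cell}(x),\mathrm{cell}(fx))$ and $(\mathrm{cell}(h^{-1}gx),\mathrm{cell}(gx))$. Condition II) is equivalent to $(\mathrm{cell}(g^{-1}y),\mathrm{cell}(y))$ being the pair $(\mathrm{cell}(h^{-1}y),\mathrm{cell}(y))$.

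The concrete plan is to construct $g$ so that $g$ and $f$ determine the same "cell map" on domain points, and $g^{-1}$ and $h^{-1}$ determine the same cell map on range points. The rectangles in $\W$ describe how the cells are matched. For a point $a_t$, $f(a_t)$ and $h(a_t)$ lie in the same cell, since by property 2 the unique element $\{a_t\}\times V\in\W$ is common to both. Similarly, $f^{-1}(a_s)$ and $h^{-1}(a_s)$ lie in the same cell.

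We build $g$ on the finite set
\[
F=\{a_t\}\cup\{f^{-1}a_s\}\cup\{h^{-1}a_s\}.
\]
Order $F$ cyclically and define $g$ on it: set $g(a_t)=f(a_t)$ and $g(h^{-1}a_s)=a_s$. On points $f^{-1}a_s$ we need to choose images consistently. The crux is that the points $f^{-1}a_s$ and $h^{-1}a_s$ may both sit in the same open cell $(a_i,a_{i+1})$ but in a different order relative to each other. We resolve this by merging: on each open rectangle cell of $\W$ we place $g$'s "breakpoints" in the order dictated by $h$ on the domain side and by $f$ on the range side. This is possible because inside a rectangle $(a_i,a_{i+1})\times(a_j,a_{j+1})$ both orders are linear and interleave freely, using density.

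We then extend this partial automorphism to all of $X$ by ultrahomogeneity, and check I) and II) cell by cell on each rectangle. Here $g$ maps $(a_i,a_{i+1})$ pieces into the same target cells as $f$ does, and its inverse agrees with $h^{-1}$ at the level of cells. This uses the linear ordering of $\W$ given by the numbering in Proposition~\ref{prop-b_betaXG-description}, including the special case (b6).

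The main obstacle is this simultaneous matching: making $g$ agree cellwise with $f$ forward and with $h$ backward inside each rectangle of $\W$. Along a single rectangle, $f$ might cross a horizontal boundary $a_{j}$ at a different place than $h$ does. The plan is to handle this by choosing, within the rectangle, the preimage under $g$ of $a_j$ to be $h^{-1}(a_j)$, while keeping $g=f$ at cell level, which works because the cells on both sides are determined by $\W$. Once $(\star)$ holds, Lemma~\ref{lemma-reolcke-equality} gives $b_rG=b_{\beta_GX}G$.
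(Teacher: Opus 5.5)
There is a genuine gap, and it is in the choice of $g$. The covering $u$ contains the singleton cells $\{a_s\}$, so conditions I and II leave no freedom at the cut points. Condition I at $x=f^{-1}a_s$ forces $g(f^{-1}a_s)=a_s$, and condition II at $x=h(a_t)$ forces $g(a_t)=h(a_t)$. Your prescriptions $g(a_t)=f(a_t)$ and $g(h^{-1}a_s)=a_s$ are the other way round. Combined with the forced values and injectivity of $g$, they would require $f(a_t)=h(a_t)$ and $f^{-1}a_s=h^{-1}a_s$. The hypothesis only says that $\Gamma(f)$ and $\Gamma(h)$ meet the same sets $U\times V$ ($U,V\in u$), which puts these points in the same cell, not at the same point.

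Your proposed repair, taking $g^{-1}(a_j)=h^{-1}(a_j)$ inside a rectangle ``while keeping $g=f$ at cell level'', is self-contradictory for the same reason. Since $\{a_j\}$ is itself a cell, agreeing with $f$ at cell level already means $g^{-1}(a_j)=f^{-1}(a_j)$. Your rewriting of II as ``$gh^{-1}$ preserves each cell'' is also wrong: II says that $h^{-1}g$ preserves each cell. So the ``main obstacle'' you describe, the interleaving of $f^{-1}a_s$ and $h^{-1}a_s$ inside a rectangle, comes only from the wrong assignment. The numbering from Proposition~\ref{prop-b_betaXG-description} plays no role. Meanwhile the step that actually needs proof, that the prescribed finite map preserves the cyclic order, is never carried out.

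The missing idea is to use the forced values. Define $g$ on $\{a_t\}\cup\{f^{-1}a_s\}$ by $g(a_t)=h(a_t)$ and $g(f^{-1}a_s)=a_s$; these are your formulas with $f$ and $h$ interchanged.

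This map is well defined. If $a_t=f^{-1}a_s$, then $\Gamma(f)$ meets $\{a_t\}\times\{a_s\}$, hence so does $\Gamma(h)$, i.e.\ $h(a_t)=a_s$.

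It also preserves the cyclic order. Take a mixed triple $[a_q,a_r,f^{-1}a_s]$. The cell $U\ni f^{-1}a_s$ lies in $(a_r,a_q)$ and $\Gamma(f)$ meets $U\times\{a_s\}$. Hence $h^{-1}a_s\in U$, which gives $[h a_q,h a_r,a_s]$. Triples with two points $f^{-1}a_q,f^{-1}a_r$ and one point $a_s$ are handled the same way: apply $f$ and look at the cell containing $f a_s$.

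Extend this map to $g\in G$ by ultrahomogeneity. Then I and II hold automatically on all of $\beta_GX$. Since $g^{-1}a_s=f^{-1}a_s$ for every $s$ and both $f$ and $g$ preserve the cyclic order of $\beta_GX$, the points $fz$ and $gz$ lie in the same cell for every $z$. Symmetrically, $g(a_t)=h(a_t)$ for all $t$ yields II. This is exactly the construction used in the paper.
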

	\begin{proof}
		We verify that condition $(\star)$ is satisfied.
		
		The base $\U_{\beta_GX}$ consists of coverings of the form $u_{\ovl{x}} = \{ \{x_i\}, (x_i, x_{i+1}) \}_{i=1}^n$, where $\ovl{x} = (x_1, \dots, x_n)$ is a cycle in $X$ and $n\geq 3$ \cite[Lemma~1]{GSorin26}. We number the elements of $u_{\ovl{x}}$ in an arbitrary order: $u_{\ovl{x}} = \{U_i \;\vert\; i=1,\dots, 2n\}$.
		
		The base of the uniformity on $\beta_GX\times \beta_GX$ consists of coverings $u_{\ovl{x}}^2 = \{U_i\times U_j \;\vert\; U_i, U_j\in u_{\ovl{x}} \}$.
		
		The base of the uniformity on $CL(\beta_GX\times \beta_GX)$ consists of entourages of the form $CL(u_{\ovl{x}}^2) = \{ (A,B) \in CL(\beta_GX\times \beta_GX)^2 \;\vert\; B\subset st(A,u_{\ovl{x}}^2); A\subset st(B,u_{\ovl{x}}^2) \}$.
		
		Let $u$ in condition $(\star)$ be $u_{\ovl{x}}$. Set $v := u$. Fix arbitrary $f, h\in G$. The condition $(\Gamma(f),\Gamma(h))\in CL(v\times v)$ becomes $(\Gamma(f),\Gamma(h))\in CL(u_{\ovl{x}}^2)$, i.e. $\Gamma(f)\subset st(\Gamma(h),u_{\ovl{x}}^2)$ and $\Gamma(h)\subset st(\Gamma(f),u_{\ovl{x}}^2)$.
		
		Suppose $\Gamma(f)\subset st(\Gamma(h),u_{\ovl{x}}^2)$. Assume $\Gamma(f)\cap (U_i\times U_j) \neq \emptyset$. Since $u_{\ovl{x}}^2$ is disjoint, $\Gamma(h)\cap (U_i\times U_j) \neq \emptyset$. Then from the conditions $\Gamma(f)\subset st(\Gamma(h),u_{\ovl{x}}^2)$ and $\Gamma(h)\subset st(\Gamma(f),u_{\ovl{x}}^2)$ it follows that
		\begin{equation}\label{f-h-property}
			\Gamma(f) \text{ and } \Gamma(h) \text{ intersect the same sets of the form } U_i\times U_j, \tag{$\ast$}
		\end{equation}
		where $U_i, U_j\in u_{\ovl{x}}$.
		
		Now we find an element $g\in G$ required by condition $(\star)$.\\
		Consider the set $\{x_1, \dots, x_n, f^{-1}x_1, \dots, f^{-1}x_n\}$. It has cardinality $m$, with $n\leq m\leq 2n$. Denote its elements by $b_1, \dots, b_m$ such that $[b_1, \dots, b_m]$. Define the value of $g$ at points $b_1, \dots, b_m$ according to the following rules:\\
		1) if $b_i = x_j$, then $gb_i = gx_j := hx_j$;\\
		2) if $b_i = f^{-1}x_j$, then $gb_i = gf^{-1}x_j := x_j$.\\
		If $b_i$ falls under both rules --- $b_i = x_j = f^{-1}x_k$, then $fx_j = x_k$. Since $\{x_j\}, \{x_k\} \in u_{\ovl{x}}$, by property (\ref{f-h-property}) we have $hx_j = x_k$. From rule 1) we get $gb_i = gx_j = hx_j = x_k$. From rule 2) we get $gb_i = gf^{-1}x_k = x_k$. The rules are consistent, and the definition is correct.

		We now show that $[gb_1, \dots, gb_m]$, i.e. for all $1\leq i<j<k\leq m$ we have $[gb_i, gb_j, gb_k]$. We have $[b_i, b_j, b_k]$.
		
		Case 1. $b_i, b_j, b_k$ are defined by rule 1). Then $b_i = x_q$, $b_j = x_r$, $b_k = x_s$, and $gb_i = hb_i$, $gb_j = hb_j$, $gb_k = hb_k$. Since $h \in G$, we have $[gb_i, gb_j, gb_k]$.
		
		Case 2. $b_i, b_j, b_k$ are defined by rule 2). Then $b_i = f^{-1}x_q$, $b_j = f^{-1}x_r$, $b_k = f^{-1}x_s$, and $gb_i = fb_i$, $gb_j = fb_j$, $gb_k = fb_k$. Since $f \in G$, we have $[gb_i, gb_j, gb_k]$.
		
		Case 3. $b_i, b_j$ are defined by rule 1), and $b_k$ is defined by rule 2). Then $b_i = x_q$, $b_j = x_r$, $b_k = f^{-1}x_s$, and $gb_i = hx_q$, $gb_j = hx_r$, $gb_k = x_s$. Since $[b_i, b_j, b_k]$, we have $f^{-1}x_s \in (x_r, x_q)$. This implies that $\exists a \in (x_r, x_q)$ such that $fa = x_s$. Then $\exists t$ such that $a \in U_t$. Consequently, $\Gamma(f)$ intersects the set $U_t \times \{x_s\}$. By property (\ref{f-h-property}), $\exists c \in U_t \subset (x_r, x_q)$ such that $hc = x_s$. Therefore, $x_s = hc \in (hx_r, hx_q)$, which implies $[hx_q, hx_r, x_s]$. That is, $[gb_i, gb_j, gb_k]$.
		
		Case 4. $b_i, b_j$ are defined by rule 2), and $b_k$ is defined by rule 1). Similarly to Case 3, we obtain $[gb_i, gb_j, gb_k]$.
		
		Thus, $[gb_1, \dots, gb_m]$. Consequently, by the ultratransitivity of the action, $\exists g \in G$ satisfying rules 1) and 2).
		
		Now we show that the following conditions are satisfied:\\
		I) $\forall z \in \beta_GX \; \exists t: fz, gz \in U_t$;\\
		II) $\forall z \in \beta_GX \; \exists t: g^{-1}z, h^{-1}z \in U_t$.
		
		Condition I). If $z = x_i$, then $gz = gx_i = hx_i \in U_t$ for some $t$. By property (\ref{f-h-property}), $fx_i \in U_t$. That is, $\exists t: fz, gz \in U_t$.
		
		If $z = f^{-1}x_i$, then $gz = x_i \in \{x_i\} \in u_{\ovl{x}}$. Also, $fz = x_i \in \{x_i\}$.
		
		Let $z \in (b_i, b_{i+1})$.\\
		Case 1. $b_i = x_q$, $b_{i+1} = x_r$. Since the interval $(b_i, b_{i+1})$ does not contain any points from the set $\{b_1, \dots, b_m\}$, it does not contain the points $x_1, \dots, x_n$, so $x_r = x_{q+1}$. Also, $(b_i, b_{i+1})$ does not contain the points $f^{-1}x_1, \dots, f^{-1}x_n$, so $\exists t$ such that $(fb_i, fb_{i+1}) = (fx_q, fx_{q+1}) \subset U_t$. Then $fz \in U_t$. From $(fx_q, fx_{q+1}) \subset U_t$, it also follows that $\Gamma(f)$ intersects $(x_q, x_{q+1}) \times U_t$ and does not intersect any set of the form $(x_q, x_{q+1}) \times U_l$ for $l \neq t$. By (\ref{f-h-property}), $\Gamma(h)$ has the same property. Consequently, $gz \in (gb_i, gb_{i+1}) = (hx_q, hx_{q+1}) \subset U_t$.\\
		Case 2. $b_i = f^{-1}x_q$, $b_{i+1} = f^{-1}x_r$. Since the interval $(b_i, b_{i+1})$ does not contain the points $f^{-1}x_1, \dots, f^{-1}x_n$, we have $f^{-1}x_r = f^{-1}x_{q+1}$. Then $fz \in (fb_i, fb_{i+1}) = (x_q, x_{q+1})$ and $gz \in (gb_i, gb_{i+1}) = (x_q, x_{q+1})$.\\
		Case 3. $b_i = x_q$, $b_{i+1} = f^{-1}x_r$. Consider the point $f^{-1}x_{r-1}$. Since $f$ is a bijection, $f^{-1}x_{r-1} \neq f^{-1}x_r$. If $f^{-1}x_{r-1} = x_q$, we obtain Case 2. We may assume that $f^{-1}x_{r-1}$ is distinct from $b_i$ and $b_{i+1}$. Since $(b_i, b_{i+1})$ does not contain the points $f^{-1}x_1, \dots, f^{-1}x_n$, we have $x_q \in (f^{-1}x_{r-1}, f^{-1}x_r)$. Therefore, $hx_q = gx_q \in (gf^{-1}x_{r-1}, gf^{-1}x_r) = (x_{r-1}, x_r)$. By (\ref{f-h-property}), $fx_q \in (x_{r-1}, x_r)$. Since $z \in (b_i, b_{i+1})$, we have $fz \in (fx_q, x_r)$, which together with $fx_q \in (x_{r-1}, x_r)$ implies $fz \in (x_{r-1}, x_r)$. Also, $gz \in (gx_q, x_r)$, and together with $gx_q \in (x_{r-1}, x_r)$, this implies $gz \in (x_{r-1}, x_r)$.\\
		Case 4. $b_i = f^{-1}x_q$, $b_{i+1} = x_r$. Similarly to Case 3, consider the point $f^{-1}x_{q+1}$.\\
		In each case, Condition I is satisfied.
		
		Condition II. This is proved similarly, using the fact that condition (\ref{f-h-property}) holds for $\Gamma(f^{-1})$ instead of $\Gamma(f)$ and for $\Gamma(h^{-1})$ instead of $\Gamma(h)$. The mapping $g^{-1}$ satisfies the rules:\\
		1') $g^{-1}x_j = f^{-1}x_j$;\\
		2') $g^{-1}hx_j = x_j$,\\
		which follow from rules 2) and 1), respectively.

		Therefore, by the lemma \ref{lemma-reolcke-equality}, $b_rG = b_{\beta_GX}G$.
	\end{proof}

	\section{The WAP compactification of the group \texorpdfstring{$Aut(X,R)$}{Aut(X,R)}}
	
	For a partial automorphism $p \in pAut(X,R)$, we denote by $D(p)$ and $I(p)$ its domain and range, respectively. We identify the graph $\Gamma(p) \subset X^2$ with the isomorphism $p: D(p) \to I(p)$. This identification is an isomorphism of semigroups with involution.
	
	\begin{propos}\label{prop-b_XG-description}
		The set $b_XG := \ovl{i_X(G)}$ is algebraically isomorphic to the semigroup $pAut(X,R)$.
		
		Moreover, $b_XG$ is a semitopological semigroup with a continuous involution $s: A \mapsto sA$.
	\end{propos}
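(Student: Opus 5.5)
Here $X$ is discrete and infinite, so $X\times X$ is discrete and locally compact. We therefore work in $2^{X\times X}$ with the Fell topology. Compact subsets of $X\times X$ are finite, so a basic neighbourhood has the form $[V_1,\dots,V_n;K_1,\dots,K_m]$ with the $V_i$ and $K_j$ finite. Hence convergence $A_\alpha\to A$ means exactly that, for every pair $(x,y)\in X^2$, eventually $(x,y)\in A_\alpha \iff (x,y)\in A$. In other words, $\tau_F$ on $2^{X\times X}$ is the product topology on $\{0,1\}^{X\times X}$, i.e. pointwise convergence of indicator functions. Everything below is checked with this description. (Note that $\em$ itself may lie in the closure, and it is the empty partial automorphism.)

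\textbf{Step 1: $\ovl{i_X(G)}\subset pAut(X,R)$.} Let $A$ be a pointwise limit of graphs $\Gamma(g_\alpha)$.

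If $(x,y),(x,y')\in A$, then eventually both pairs lie in $\Gamma(g_\alpha)$. This forces $y=y'$, and symmetrically for the second coordinate. So $A$ is the graph of an injection $p\colon D(A)\to I(A)$.

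If $x_1,x_2,x_3\in D(A)$ are distinct, then eventually $g_\alpha x_i=p x_i$ for $i=1,2,3$, so $[x_1,x_2,x_3]\iff[px_1,px_2,px_3]$. Hence $p\in pAut(X,R)$.

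\textbf{Step 2: $pAut(X,R)\subset\ovl{i_X(G)}$.} Take $p\in pAut(X,R)$ and a basic neighbourhood given by finite $V=\{(x_i,y_i)\}\subset\Gamma(p)$ and finite $K\subset X^2\sm\Gamma(p)$.

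Let $F\subset D(p)$ be a finite set containing all $x_i$ and every first coordinate $x$ of a pair $(x,y)\in K$ with $x\in D(p)$. Enlarge $F$ further by all $p^{-1}y$ with $(x,y)\in K$ and $y\in I(p)$. Since $p|_F$ is order preserving, listing $F$ as a cycle gives cycles $\ovl{a}$ and $p\ovl{a}$.

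We need $g\in G$ with $g|_F=p|_F$ that avoids the finitely many remaining forbidden pairs. These are pairs $(x,y)\in K$ with $x\notin D(p)$ or $y\notin I(p)$; if $x\in D(p)$ and $y\in I(p)$, the pair is already excluded by $g|_F=p|_F$. Extend the partial isomorphism in finitely many steps:
\begin{itemize}
\item send each relevant $x\notin D(p)$ to a point of the correct gap of $p(F)$ that avoids the finitely many forbidden $y$;
\item symmetrically, choose preimages of the relevant $y\notin I(p)$ among points outside the finitely many forbidden $x$.
\end{itemize}
Density and infiniteness of the order make these choices possible. Ultratransitivity then yields $g$. This gives the algebraic bijection, and composition of graphs corresponds to composition of partial maps.

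\textbf{Step 3: topology.}

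The involution $s$ is continuous because it is induced by the coordinate swap on $\{0,1\}^{X\times X}$.

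For separate continuity, fix $q$ and let $p_\alpha\to p$ pointwise. We must check whether $(x,z)\in qp_\alpha$ eventually agrees with $(x,z)\in qp$.
\begin{itemize}
\item If $(x,z)\in qp$ via $y$, then eventually $(x,y)\in p_\alpha$, so $(x,z)\in qp_\alpha$.
\item If $(x,z)\notin qp$, then either $z\notin I(q)$, in which case $(x,z)\notin qp_\alpha$ always, or $z=qy$ for a unique $y$ and $(x,y)\notin p$, so eventually $(x,y)\notin p_\alpha$.
\end{itemize}
The uniqueness of $y$ (injectivity of $q$) is the key point. Right multiplication $p\mapsto pq$ follows by symmetry, or by applying $s$.

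\textbf{Main obstacle.} The main difficulty is the extension argument in Step 2, which handles the forbidden pairs lying outside $D(p)\times I(p)$. It must be done carefully, one point at a time, using density.
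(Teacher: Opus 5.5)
Your proof is correct and follows essentially the same route as the paper. Both prove the two inclusions via basic Fell neighbourhoods and ultratransitivity (your reformulation as pointwise convergence in $\{0,1\}^{X\times X}$ is exactly the Fell topology for discrete $X$), get separate continuity of multiplication from injectivity of the fixed factor, and get continuity of $s$ from the coordinate swap. The only difference is that your Step 2 does more than needed: the paper simply requires $gx_i=y_i$ and sends the remaining first coordinates $pr_1(K)\setminus\{x_1,\dots,x_n\}$ outside $pr_2(K)$, so enlarging $F$ by the preimages $p^{-1}y$ and your second bullet can be dropped.
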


	\begin{proof}
		Since the space $X$ is locally compact and locally connected, the graphical compactification $b_XG$ is well defined \cite[Theorem~3.33]{KozLeid-2}.
		
		Inclusion $pAut(X,R) \subset b_XG$:\\
		let $f \in pAut(X,R)$. Consider an arbitrary small neighbourhood of $f$ of the form $[\{(x_1,y_1)\}, \dots, \{(x_n,y_n)\};K]$ (if $f = \em$, the neighbourhood has the form $[\em;K]$). The set $K$ is finite. Let $pr_1,pr_2$ be the projections of $X\times X$ onto the first and second factors. Let $\{a_1,\dots, a_m\} := pr_1(K) \sm \{x_1,\dots, x_n\}$. By the density of the order $R$, the intersection of the set $X \sm pr_2(K)$ with any interval is infinite. Then, by the ultratransitivity of the action $G\acts (X,R)$, there exists $g \in G$ such that $gx_i = y_i$ and $ga_j \in X \sm pr_2(K)$. Hence, $g \in [\{(x_1,y_1)\}, \dots, \{(x_n,y_n)\};K]$.
		
		Inclusion $pAut(X,R) \supset b_XG$:\\
		let $A \in b_XG$. We have $\em \in b_XG$ and $\em \in pAut(X,R)$. Assume that $A \neq \em$. Consequently, $D(A) \neq \em \neq I(A)$. Then $\forall x \in D(A) \; \exists ! y \in I(A) : (x,y) \in A$. Indeed, the existence follows from the definition of $D(A)$. Let $y_1, y_2 \in A[x]$. Then $A \in W = \{(x,y_1)\}^- \cap \{(x,y_2)\}^-$. If $y_1 \neq y_2$, we get a contradiction, since $W \cap i_X(G) = \em$. Similarly, $\forall y \in I(A) \; \exists ! x \in D(A) : (x,y) \in A$. That is, $A$ is the graph of a bijection $D(A) \to I(A)$.

		We have $\forall x_1, x_2, x_3 \in D(A) \; ([x_1,x_2,x_3] \implies [A[x_1], A[x_2], A[x_3]])$. Indeed, $[x_1,x_2,x_3]$ implies that $x_1, x_2, x_3$ are distinct. By the above, $A[x_1], A[x_2], A[x_3]$ are also distinct. Suppose $\lnot [A[x_1], A[x_2], A[x_3]]$. Then $[A[x_1], A[x_3], A[x_2]]$. We have $A \in W = \{(x_1, A[x_1])\}^- \cap \{(x_2, A[x_2])\}^- \cap \{(x_3, A[x_3])\}^-$. Then if $g \in i_X(G) \cap W$, we have $gx_i = A[x_i]$. Hence $[gx_1, gx_3, gx_2]$, which implies $[x_1, x_3, x_2]$, contradicting $[x_1,x_2,x_3]$. Thus, $i_X(G) \cap W = \em$, a contradiction with $A \in b_XG$. Therefore, $A$ is the graph of some isomorphism $(D(A),R) \to (I(A),R)$.

		Identifying the sets $b_XG$ and $pAut(X,R)$, we verify the continuity of left multiplication. Fix $p,q \in b_XG$ and a neighbourhood\\
		$W = [\{(x_1,y_1)\}, \dots, \{(x_n,y_n)\}; \{(s_1,t_1)\}, \dots, \{(s_m,t_m)\}] \ni pq$. Then $q \in U = [\{(x_1,p^{-1}y_1)\}, \dots, \{(x_n,p^{-1}y_n)\};\em]$. For all $i=1,\dots, m$ such that $s_i \in D(q)$, let $V_i = [\{(s_i,q(s_i))\};\em]$. For the remaining $i$, either $t_i \in I(p)$, then let $V_i = [\em;\{(s_i,p^{-1}(t_i))\}]$, or $t_i \notin I(p)$, then let $V_i = b_XG$. Thus, $q \in U \cap \bigcap\limits_{i=1}^m V_i$ and $pq' \in W$ for every $q' \in U \cap \bigcap\limits_{i=1}^m V_i$. That is, multiplication by $p$ on the left is continuous at the point $q$. Continuity of right multiplication is verified similarly.
		
		Continuity of $s$. Let $W = [\{(x_1,y_1)\}, \dots, \{(x_n,y_n)\}; \{(s_1,t_1)\}, \dots, \{(s_m,t_m)\}]$. Then $sW = [\{(y_1,x_1)\}, \dots, \{(y_n,x_n)\}; \{(t_1,s_1)\}, \dots, \{(t_m,s_m)\}]$ is an open set. The map $s$ is open and $s^2 = id$, hence $s$ is continuous (a homeomorphism).
	\end{proof}

	\begin{theorem}\textup{\cite{KozLeid-2}}\label{theorem-wap-linear}
		Let $(Y,<)$ be an ultrahomogeneous linearly ordered set. Then the WAP compactification of the group $(Aut(Y,<), \tau_\partial)$ is topologically isomorphic to the semigroup of all partial automorphisms of $(Y,<)$ in the Fell topology: $wAut(Y,<) = (pAut(Y,<), \tau_F)$.
	\end{theorem}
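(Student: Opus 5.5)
This theorem is quoted from \cite{KozLeid-2}, so no proof is needed here; still, here is how I would argue it, following the scheme of Proposition~\ref{prop-b_XG-description}. The idea is to show two things. First, the graph compactification $b_YAut(Y,<)$, taken in the Fell topology on $2^{Y\times Y}$ with $Y$ discrete, is algebraically $pAut(Y,<)$ and is a semitopological semigroup. Second, it is the largest semitopological semigroup compactification, i.e.\ every WAP function factors through it.

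\textbf{Step 1: identify the closure.} Let $A$ lie in the closure of the graphs. Exactly as in the proof of Proposition~\ref{prop-b_XG-description}, the open sets $\{(x,y_1)\}^-\cap\{(x,y_2)\}^-$ and $\{(x_1,y_1)\}^-\cap\{(x_2,y_2)\}^-$ (with $x_1<x_2$, $y_1>y_2$) do not meet $i_Y(G)$. Hence $A$ is the graph of an order isomorphism between subsets of $Y$.

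Conversely, given $p\in pAut(Y,<)$ and a basic Fell neighbourhood $[\{(x_i,y_i)\};K]$ with $K$ finite, ultratransitivity of $Aut(Y,<)$ and density of the order give $g$ with $gx_i=y_i$. We can also arrange that $g$ sends the finitely many remaining points of $pr_1(K)$ outside $pr_2(K)$. This is possible because every interval of $Y$ is infinite.

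\textbf{Step 2: algebraic and topological structure.} Separate continuity of composition and continuity of the inversion $s$ are checked on basic sets $[\{(x_i,y_i)\};\{(s_j,t_j)\}]$, exactly as in Proposition~\ref{prop-b_XG-description}. Since $G$ is dense and the compactification is a semitopological semigroup compactification, we get $wG\geq b_YG$.

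\textbf{Step 3: the reverse inequality $wG\le b_YG$.} This is the main obstacle. I would proceed in three moves.

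\emph{Restrict to the Roelcke algebra.} Every $\phi\in WAP(G)$ is Roelcke uniformly continuous. So it suffices to work inside $b_rG$, which by the analogue of Theorem~\ref{theorem-roelcke-equals-beta} for linear orders is the graph compactification over the maximal $G$-compactification of $Y$ (adding $y^\pm$ and gap points).

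\emph{Show WAP functions ignore the extra points.} I would show that a WAP function cannot distinguish two elements of $b_rG$ that have the same trace on $Y\times Y$. The tool is Grothendieck's double-limit criterion: $\lim_m\lim_n\phi(g_mh_n)=\lim_n\lim_m\phi(g_mh_n)$ whenever both limits exist. Given two limit relations $A,A'$ with equal trace $A\cap Y^2=p$, I would build sequences $g_m,h_n$ such that the products $g_mh_n$ converge to $A$ in one iterated order and to $A'$ in the other. The construction pushes points toward $y^-$ or $y^+$, using ultratransitivity on ever larger finite cycles. This forces $\phi(A)=\phi(A')$.

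\emph{Conclude.} Hence $\phi$ factors through the map $b_rG\to b_YG$, $A\mapsto A\cap Y^2$. It remains to check that this map is continuous and that the induced function is continuous for the Fell topology; both hold because Fell basic sets only involve finitely many points of $Y^2$.

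The delicate part is the double-limit construction, specifically arranging compatible cycles so that both iterated limits exist. I would first try $g_m,h_n$ that agree with $p$ on a fixed finite set and shift the other points monotonically in opposite directions, so that the limits send them off to the boundary differently.
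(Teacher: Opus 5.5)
The paper does not prove this statement; it is quoted from \cite{KozLeid-2}. The natural comparison is therefore with the paper's own argument for the cyclic analogue, Theorem~\ref{theorem-wG-description}.

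\textbf{Steps 1--2 are fine.} They are Proposition~\ref{prop-b_XG-description} transcribed to a linear order, and they give $wG\geq b_YG$.

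\textbf{Step 3 has a genuine gap.} All the content of the theorem is in the reverse inequality. There your decisive move is only announced, and in the form you state it, it cannot work. If both iterated limits of $g_mh_n$ exist in $b_rG$, then $h_n\to B$ and $g_m\to C$, and the two iterated limits are $\lim_m g_mB$ and $\lim_n Ch_n$. By upper semicontinuity of composition, both lie in the relational composite $CB=\{(x,z)\mid \exists y\ (x,y)\in B,\ (y,z)\in C\}$. By monotonicity of $B$ and $C$, the middle witnesses $y$ for two points of $CB$ in anti-monotone position must coincide.

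Now take two elements with empty trace on $Y^2$:
\begin{itemize}
\item the corner $A=\beta_GY\times\{-\infty\}\cup\{+\infty\}\times\beta_GY$;
\item the staircase $A'=\{-\infty\}\times[-\infty,b]\cup[-\infty,a]\times\{b\}\cup\{a\}\times[b,+\infty]\cup[a,+\infty]\times\{+\infty\}$, with $a,b\in\beta_GY\sm Y$ different from $\pm\infty$.
\end{itemize}
The anti-monotone pair $(+\infty,-\infty)\in A$, $(-\infty,b)\in A'$ forces some $y$ with $sB[y]=\beta_GY$, hence $B[a]=\{y\}$. Then $(a,+\infty)\in A'$ forces $C[y]=\beta_GY$. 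For such $B$ and $C$, the limit $\lim_m g_mB$ has one full-length horizontal segment and $\lim_n Ch_n$ has one full-length vertical segment. $A'$ has neither. So no pair of sequences has $A$ and $A'$ as its two iterated limits. Your claim that a single double limit identifies any two elements with the same trace already fails in the fibre over the empty partial automorphism.

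\textbf{What is missing} is the mechanism that turns such elementary swaps into the theorem.
\begin{itemize}
\item For the compactification map $F\colon b_rG\to wG$, the relation $F(A)=F(A')$ is a closed equivalence relation.
\item The swap is best written as $F(\lim_m g_mB)=F(C)F(B)=F(\lim_n Ch_n)$. This holds for nets because $wG$ is semitopological. That matters: for uncountable $Y$, points of $b_rG$ need not be limits of sequences, so Grothendieck's sequential criterion alone does not reach them.
\item You must then chain such identities, use separate continuity to degenerate one factor, and induct on the number of corners over a dense family of finitely described elements. Closedness and density finish the argument.
\end{itemize}
This is exactly what the paper does for the fibre over $0$ in Theorem~\ref{theorem-wG-description}. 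It uses the identity $F(C_{\ovl c}^{\ovl b})=F(C_{\ovl a}^{\ovl b})F(C_{\ovl c}^{\ovl d})$, collapses a factor into $\Cc_{n-1}$, and uses density of $\bigcup_n\Cc_n$ in $T^{-1}(0)$.

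For the nonzero fibres, the paper simply invokes the linear theorem on stabilizers. That route is closed to you, since it is the statement being proved. In those fibres the complement of $p$ may split into infinitely many gap regions, which need their own argument.

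As written, Step~3 is a plan whose central claim is false in general, not a proof.
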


	\begin{lemma}\textup{\cite[Lemma~2.35]{KozLeid-2}}\label{lemma-closure-of-group-is-sscomp}
		Let $(S,\cdot)$ be a compact semitopological semigroup, and $(G,\cdot)$ a subsemigroup in $S$. If $G$ is isomorphic to some topological group $H$, then $\ovl{G}$ is a semitopological semigroup compactification of the group $H$.
	\end{lemma}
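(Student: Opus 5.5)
We need to prove Lemma \ref{lemma-closure-of-group-is-sscomp}: if $S$ is a compact semitopological semigroup and $G\subset S$ is a subsemigroup algebraically isomorphic to a topological group $H$, then $\ovl{G}$ is a semitopological semigroup compactification of $H$. (As stated it must be read with the isomorphism $\phi:H\to G$ being continuous; a merely algebraic isomorphism says nothing about topology. We also take "compactification" in the possibly improper sense, i.e.\ $\phi$ is a continuous homomorphism with dense image, as the paper allows for $wG$.)

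The plan has three steps.

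\textbf{Step 1: $\ovl{G}$ is a compact semitopological semigroup.} It is compact, being closed in the compact $S$. For closure under products, fix $a\in G$. Right multiplication $\rho_a:x\mapsto xa$ is continuous on $S$ and maps $G$ into $G$, so
\[
\rho_a(\ovl{G})\subset\ovl{\rho_a(G)}\subset\ovl{G},
\]
that is, $\ovl{G}\,G\subset\ovl{G}$. Now fix $b\in\ovl{G}$. Left multiplication $\lambda_b:x\mapsto bx$ is continuous and maps $G$ into $\ovl{G}$ by what was just shown, so
\[
\lambda_b(\ovl{G})\subset\ovl{G},
\]
that is, $\ovl{G}\,\ovl{G}\subset\ovl{G}$. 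Separate continuity on $\ovl{G}$ is inherited from $S$.

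\textbf{Step 2: the compactification map.} Put $\phi:H\to G\subset\ovl{G}$. It is a homomorphism with dense image by construction, and continuous by the hypothesis above. Hence $(\ovl{G},\phi)$ is a semitopological semigroup compactification of $H$.

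\textbf{Step 3: the identity of $G$ acts as an identity on all of $\ovl{G}$.} Let $e$ be the identity of $G$; it is an idempotent of $S$. Then $\rho_e$ equals the identity on $G$, and since $\rho_e$ is continuous and $S$ is Hausdorff (it is compact, and spaces are Tychonoff by convention), $\rho_e$ equals the identity on $\ovl{G}$. The same argument with $\lambda_e$ handles the other side. So $\ovl{G}$ is a monoid with identity $\phi(1_H)$, and $\phi$ is a monoid map.

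\textbf{Main obstacle.} The only delicate point is the hypothesis itself: one must make sure "isomorphic to a topological group $H$" includes continuity of $H\to G\subset S$. In the intended application, $G$ sits inside $b_XG$ via $i_X$, which is continuous. Beyond that, the two-stage closure argument in Step 1 is the standard trick for separately continuous multiplication, and no joint continuity is needed.
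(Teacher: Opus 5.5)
Your proof is correct. The paper gives no argument of its own for this lemma; it imports it from \cite[Lemma~2.35]{KozLeid-2}. Your two-stage closure argument is the standard proof of that fact: right translations by elements of $G$ give $\ovl{G}\,G\subset\ovl{G}$, and then left translations by elements of $\ovl{G}$ give $\ovl{G}\,\ovl{G}\subset\ovl{G}$.

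One adjustment concerns the reading of the hypothesis. The paper declares all compactifications proper, so ``isomorphic'' should be read as ``$G$, with the subspace topology from $S$, is topologically isomorphic to $H$''. Then $\phi$ is a topological embedding, and your Step~2 gives a proper compactification with no extra work.

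This is exactly the situation in both places the lemma is used: $St_x\subset b_XG$ via $i_X$, and $H_x\subset wG$ via $(\diamond)$. In the second case your Step~3 is the pertinent observation. The identity of $H_x$ is the idempotent $id_{X\sm\{x\}}$, not the identity of $wG$, so it is worth knowing that it still acts as a two-sided identity on the closure.
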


	\begin{theorem}\label{theorem-wG-description}
		$wG = b_XG$.
	\end{theorem}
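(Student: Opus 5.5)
We need to prove $wG = b_XG$. By Proposition~\ref{prop-b_XG-description}, $b_XG = \ovl{i_X(G)}$ is a compact semitopological semigroup containing $i_X(G)$ as a subsemigroup isomorphic to the topological group $G$. The Fell topology restricted to $i_X(G)$ is $\tau_\partial$, as recalled in the introduction. So by Lemma~\ref{lemma-closure-of-group-is-sscomp}, $b_XG$ is a semitopological semigroup compactification of $G$. Since $wG$ is the largest such compactification, we get $wG \geq b_XG$. The substance of the proof is the reverse inequality $wG \le b_XG$: every $\phi\in WAP(G)$ must factor continuously through $i_X$ and extend to $b_XG$.

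For the reverse inequality I would reduce to the linear case, Theorem~\ref{theorem-wap-linear}. Fix $x_0\in X$ and let $H = St_{x_0}$, which is open in $G$. As noted in the preliminaries, $H$ acts ultratransitively on the linearly ordered set $Y = (X\sm\{x_0\}, <_{x_0})$, so $H \cong (Aut(Y,<),\tau_\partial)$. Hence $wH = (pAut(Y,<),\tau_F)$. Since $H$ is open, $G = \bigcup_{k} g_k H$ for coset representatives $g_k$, one for each point $g_kx_0\in X$. The restriction of a WAP function $\phi$ to each translate, $h\mapsto \phi(g_k h h')$ for fixed $g_k,h'$, is WAP on $H$ and therefore extends to $pAut(Y,<)$. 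The key claim is then the following. If a net $f_\alpha\to A$ in $b_XG$, then $\phi(f_\alpha)$ converges.

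To prove the claim, I would pass to a subnet with $f_\alpha x_0$ and $f_\alpha^{-1}x_0$ either eventually constant or tending to infinity in $X$. In the first case, write $f_\alpha = g_k h_\alpha g_l^{-1}$ with $h_\alpha\in H$. Then $h_\alpha$ converges in $pAut(Y,<)$ to a partial automorphism determined by $A$, and the linear result gives convergence of $\phi(f_\alpha)$. The second case, where $x_0\notin D(A)$ or $x_0\notin I(A)$, is the main obstacle. Here the cosets change along the net, so the linear theorem does not apply to a single coset. I would handle it with Grothendieck's double-limit criterion, which characterizes $\phi\in WAP(G)$ by the property $\lim_m\lim_n\phi(a_mb_n)=\lim_n\lim_m\phi(a_mb_n)$ whenever both iterated limits exist. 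Suppose two nets $f_\alpha, f'_\beta\to A$ had $\phi$-limits differing by some $\epsilon>0$. Using ultratransitivity as in the inclusion $pAut\subset b_XG$ of Proposition~\ref{prop-b_XG-description}, I would then factor elements as $f = a_m b_n$, where the $a_m$ fix the finite part of $A$ and push points away. This would produce sequences violating the double-limit condition. The combinatorics are the same as in Kozlov–Leiderman's proof for linear orders, adapted to cycles.

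Once the claim is established, every $\phi\in WAP(G)$ extends continuously to $b_XG$. This gives $WAP(G)\subset C(b_XG)|_G$, so $wG\le b_XG$. Combined with the first paragraph, $wG = b_XG$. Moreover, the equivalence preserves the semigroup structure by \cite[\S 3.3]{KozLeid-1}.
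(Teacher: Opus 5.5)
There is a genuine gap: the case $A=0$, the empty partial automorphism, is not proved, and that case is the real content of the theorem.

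\textbf{What is fine.} Your first paragraph and your Case 1 are correct and match the paper's first steps. By Proposition~\ref{prop-b_XG-description} and Lemma~\ref{lemma-closure-of-group-is-sscomp} you get $wG\ge b_XG$. On each clopen piece $\{p\in b_XG: px=y\}=g_y\ovl{St_x}^b$ a WAP function extends, because $\ovl{St_x}^b=wSt_x$ by the linear theorem. (Write $f_\alpha=g_yh_\alpha$; your two-sided form $g_kh_\alpha g_l^{-1}$ does not in general have $h_\alpha\in St_{x_0}$.) You should not fix $x_0$, though. For $A\neq\em$, choose $x\in D(A)$ and glue the extensions over the open sets $D_x=\{p: x\in D(p)\}$. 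This already shows that $\pi\colon wG\to b_XG$ is injective off $\pi^{-1}(0)$, so only $A=0$ remains.

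\textbf{Why the case $A=0$ is hard.} Near $0$ lie all $g$ with $gE\cap E=\em$ for a finite $E$. These $g$ split into many Roelcke double cosets $V_EgV_E$, where $V_E$ is the pointwise stabilizer of $E$, according to how $gE$ interleaves with $E$ in the cyclic order. A priori a WAP function is only Roelcke uniformly continuous, i.e.\ approximately constant on each such double coset. One must show it cannot separate the different interleavings in the limit. Your sketch does not do this:
\begin{itemize}
\item ``The $a_m$ fix the finite part of $A$ and push points away'' has no content when $A=\em$.
\item You never say how two nets $f_\alpha,f'_\beta\to 0$ would be merged into one array $a_mb_n$ that realizes one interleaving for $m<n$ and the other for $m>n$.
\item Even with such arrays for adjacent interleavings, chaining them at a fixed scale $E$ accumulates a number of errors that grows with $|E|$. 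So you need exact limit points, i.e.\ an explicit model of $b_rG$.
\end{itemize}
Pointing to the linear combinatorics ``adapted to cycles'' supplies none of this.

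\textbf{How the paper closes it.} The paper handles $0$ by semigroup arguments in $wG$ rather than by double limits.
\begin{itemize}
\item It first shows that $wG$ has a zero $z$. The groups $H_x=\{p: x\notin D(p),\ p\cup\{(x,x)\}\in G\}$ have closures $S_x$ in $wG$ that are copies of $wH_x\cong pAut(X\sm\{x\},<_x)$, each with a zero $z_x$. Computing products $d_ad_b$ of one-point identities shows that all $z_x$ coincide. Viewing $gd_x=\{(x,gx)\}$ inside a suitable $S_y$ then gives $gz=z=zg$.
\item It then uses Theorem~\ref{theorem-roelcke-equals-beta} and the factor map $F\colon b_rG=b_{\beta_GX}G\to wG$. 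One has $\pi^{-1}(0)=F(T^{-1}(0))$, where $T^{-1}(0)=\{A\in b_rG: A\cap X^2=\em\}$.
\item The set $T^{-1}(0)$ contains the dense family $\bigcup_n\Cc_n$ of staircases $C_{\ovl a}^{\ovl b}$. The identities $F(C_{\ovl c}^{\ovl b})=F(C_{\ovl a}^{\ovl b})F(C_{\ovl c}^{\ovl d})$, together with separate continuity, collapse an $n$-staircase onto an $(n-1)$-staircase. By induction $F(\Cc_n)=\{z\}$, and by density $\pi^{-1}(0)=\{z\}$.
\end{itemize}
This collapse of all Roelcke types lying over $0$ to a single point is the missing idea. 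Any double-limit version of your argument would have to reproduce it.
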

	
	\begin{proof}
		By Proposition~\ref{prop-b_XG-description}, $b_XG$ is a semitopological semigroup compactification of $G$. Consequently, $wG \geq b_XG$, i.e., there exists a compactification map $\pi: wG \to b_XG$ that is a semigroup homomorphism. It suffices to show that $\pi$ is injective. The inequality also implies that $WAP(G) \supset \A$, where $WAP(G)$ is the algebra of WAP functions on $G$, and $\A$ is the algebra of all continuous bounded functions on $G$ that extend continuously to $b_XG$.
		
		\smallskip
		
		\textbf{$b_XG$ is a Rees quotient semigroup of $wG$.} We show that every function $\phi \in WAP(G)$ extends continuously to $b_XG \sm \{0\}$. Here, $0$ denotes the zero element of the semigroup $b_XG$ (the empty partial automorphism of $(X,R)$).
		
		Fix $x \in X$. Then the action $St_x \acts (Y,<) = (X \sm \{x\},<_x)$ is ultratransitive. The stabilizer $St_x = \{g \in G \;\vert\; gx = x\}$ is a topological group. For a partial automorphism $p \in b_XG$, by the definition of the Fell topology $\tau_F$, the closure of the stabilizer $St_x$ in $b_XG$ has the form $\ovl{St_x}^b = \{ p \in b_XG \;\vert\; x \in D(p),\; px = x\}$ and is a semitopological semigroup by Lemma~\ref{lemma-closure-of-group-is-sscomp}. The map $\eta: (pAut(Y,<),\tau_F) \to \ovl{St_x}^b : p \mapsto p \cup \{(x,x)\}$ is a topological isomorphism, and $\eta|_{Aut(Y,<)} : (Aut(Y,<),\tau_\partial) \to St_x$ is a topological group isomorphism. By Lemma~\ref{lemma-closure-of-group-is-sscomp} and Theorem~\ref{theorem-wap-linear}, we have $\ovl{St_x}^b = wSt_x$.
		
		The orbit $Orb_G(\phi) = \{ \phi_g \;\vert\; g \in G\}$ of the function $\phi$, where $\phi_g(f) = \phi(gf)$, is weakly relatively compact in $C_b(G)$. Then the orbit $Orb_{St_x}(\phi)$ is weakly relatively compact, so $\phi|_{St_x} \in WAP(St_x)$. Consequently, $\phi|_{St_x}$ extends continuously to a function $\ovl{\phi|_{St_x}}$ on $wSt_x = \ovl{St_x}^b$. For each $g \in G$, we have $\phi_g \in WAP(G)$, so $\phi_g|_{St_x}$ extends continuously to $\ovl{\phi_g|_{St_x}}$ on $\ovl{St_x}^b$.
		
		The action $G \acts X$ is transitive, so $G = \bigsqcup\limits_{y \in X} g_ySt_x$, where $g_y x = y$ (the cosets $g_ySt_x$ and $g_zSt_x$ intersect if and only if $g_yx = g_zx$, i.e. $y = z$).

		Define an extension $\phi: G \to \Cb$ to $\psi_x: \bigcup\limits_{y \in X} g_y\ovl{St_x}^b \to \Cb$. Since $\phi|_{gSt_x} = \phi_g|_{St_x}$, we set $\psi_x|_{g_ySt_x} = \phi_{g_y}|_{St_x}$ (i.e., $\psi_x(g_yf) := \phi_{g_y}(f)$) and $\psi_x|_{g_y\ovl{St_x}^b} = \ovl{\phi_{g_y}|_{St_x}}$ (i.e., $\psi_x(g_yp) := \phi_{g_y}(p)$). Then $\psi_x|_G = \phi$. Since $g_y\ovl{St_x}^b = \{p \in b_XG \;\vert\; x \in D(p), px = y\} = b_XG \cap \{(x,y)\}^-$ is clopen in $b_XG$, and for $y \neq z$, $g_y\ovl{St_x}^b \cap g_z\ovl{St_x}^b = \em$, the map $\psi_x$ is well-defined and continuous on $D_x := \bigcup\limits_{y \in X} g_y\ovl{St_x}^b = \{p \in b_XG \;\vert\; x \in D(p)\} = \left( \bigcup\limits_{y \in X} \{(x,y)\}^- \right) \cap b_XG$ (open in $b_XG$).
		
		By similar reasoning, for each $x \in X$, we obtain a family $\{ \psi_x: D_x \to \Cb \;\vert\; x \in X\}$ of continuous extensions of $\phi$. If $x \neq y$, then the functions $\psi_x$ and $\psi_y$ are compatible: $D_x$ and $D_y$ are open, so $G \cap (D_x \cap D_y)$ is dense in $D_x \cap D_y$; $\psi_x|_G = \phi = \psi_y|_G$. Therefore, the combination of maps $\psi := \Nabla\limits_{x \in X} \psi_x : \bigcup\limits_{x \in X} D_x \to \Cb$ is a continuous extension of $\phi$ to $D := \bigcup\limits_{x \in X} D_x = \{p \in b_XG \;\vert\; D(p) \neq \em\}$. We have $b_XG \sm D = \{0\}$. Every $\phi \in WAP(G)$ extends uniquely to $\psi: b_XG \sm \{0\} \to \Cb$, equivalently, for the compactification map $\pi: wG \to b_XG$, we have $|\pi^{-1}(p)| = 1 \; \forall p \in b_XG \sm \{0\}$. That is, $b_XG$ is a Rees quotient semigroup of $wG$.

		Henceforth, we identify the sets $b_XG \sm \{0\}$ and $wG \sm \pi^{-1}(0). \hfill (\diamond)$
		
		\smallskip
		
		\textbf{$wG$ is a semigroup with zero $z$.}
		
		For each $x \in X$, denote by $H_x = \{ p \in b_XG \;\vert\; x \notin D(p), p \cup \{(x,x)\} \in G\}$ a topological group isomorphic to $St_x$ and $Aut(X \sm \{x\}, <_x)$. The closure $\ovl{H_x}^b$ is a semitopological semigroup isomorphic to $pAut(X \sm \{x\}, <_x)$. By Theorem~\ref{theorem-wap-linear}, $\ovl{H_x}^b = wH_x$. Note that $0 \in \ovl{H_x}^b \sm H_x$.
		
		Viewing $H_x$ as a subset of $wG$ by virtue of ($\diamond$), we obtain a semitopological semigroup compactification $S_x := \ovl{H_x}^w$ of the group $H_x$. Then $\pi|_{S_x}: S_x \to \ovl{H_x}^b = wH_x$ is a map of semitopological semigroup compactifications of $H_x$. Consequently, $\pi|_{S_x}$ is an isomorphism of the semigroups $S_x$ and $wH_x$. In particular, the semigroup $S_x$ contains a zero element $z_x$; $\pi(z_x) = 0$.

		Let $x \neq y$. Consider points $a, b \in X$ distinct from $x$ and $y$. Denote $d_a = \{(a,a)\}$ and $d_b = \{(b,b)\}$ --- partial automorphisms. Then $d_a, d_b \in S_x \cap S_y$. Since $d_a d_b = 0$ in $b_XG$, it follows that $d_a d_b = z_x$ in $S_x$, and $d_a d_b = z_y$ in $S_y$. Hence, $z_x = z_y$. Therefore, we can define $z := z_x = z_y$ for all $x, y \in X$, with $\pi(z) = 0$.
		
		Fix $g \in G$. Since $\pi|_{S_x}$ is an isomorphism, we have $\pi^{-1}(0) \cap S_x = \{z\}$. Multiplying this equality by $g$, we obtain $\pi^{-1}(0) \cap gS_x = \{gz\}$. Fix $a \in X \sm \{x, gx\}$. There exists $y \in (a,x) \cap (a,gx)$. Then $d_a, gd_x = \{(x,gx)\} \in S_y$. We have $z = (gd_x) d_a = g(d_x d_a) = gz$. Therefore, $\forall g \in G$, $gz = z$. Similarly, $zg = z$. By the density of $G$ in $wG$ and the separate continuity of multiplication in $wG$, the point $z$ is the zero element in $wG$.
		
		\smallskip
		
		\textbf{We have $|\pi^{-1}(0)| = 1$.}
		
		Since $b_rG \geq wG$, there exists a compactification map $F: b_rG \to wG$. By Theorem~\ref{theorem-roelcke-equals-beta}, we identify $b_rG$ with $b_{\beta_GX} G$. For $T := \pi \circ F: b_rG \to b_XG$, we have $T(A) = A \cap X^2$. Then $T^{-1}(0) = \{A \in b_rG \;\vert\; A \cap X^2 = \em\}$.
		
		In what follows, $a, b, c, d, \dots$ denote points of the set $\beta_GX \sm X$.
		
		Define $\Cc_1 = \{ C_a^b \;\vert\; a, b \in \beta_GX \sm X\}$, where $C_a^b := \{a\} \times \beta_GX \cup \beta_GX \times \{b\}$. For each $n \in \Nb$ ($n \geq 2$), define
		\[
		\Cc_n = \Cc_{n-1} \cup \{ C_{\ovl{a}}^{\ovl{b}} \;\vert\; \ovl{a} = (a_1, \dots, a_n),\ \ovl{b} = (b_1, \dots, b_n) \text{ are cycles in } \beta_GX \sm X \} \subset T^{-1}(0),
		\]
		where
		\begin{align*}
			C_{\ovl{a}}^{\ovl{b}} = [a_1,a_2] \times \{b_1\} \cup [a_2,a_3] \times \{b_2\} \cup \dots \cup [a_{n-1},a_n] \times \{b_{n-1}\} \cup [a_n,a_1] \times \{b_n\} \cup \\
			\cup \{a_1\} \times [b_n, b_1] \cup \{a_2\} \times [b_1, b_2] \cup \dots \cup \{a_n\} \times [b_{n-1}, b_n].
		\end{align*}

		We show that $F(\Cc_n) = \{z\}$ for all $n \geq 1$. We proceed by induction.
		
		\textbf{Base case: $n = 1$.} The set $\Cc_1$ is a closed left- and right-invariant subset of $b_rG$ (since $gC_a^b = C_a^{g b}$ and $C_a^b g = C_{g^{-1}a}^b$). Consequently, $F(\Cc_1)$ is an ideal in $wG$, so $z \in F(\Cc_1)$.
		
		For $d \neq a$, define $C_a^b \ast C_c^d := C_c^b$. For any net $G \ni g_\alpha \to C_a^b$, we have $\lim\limits_{\alpha} (g_\alpha d) = b$. Then $C_a^b \ast C_c^d = C_c^b = \lim\limits_{\alpha} (C_c^{g_\alpha d}) = \lim\limits_{\alpha} (g_\alpha C_c^d)$. We obtain the following chain of equalities:
		\begin{align*}
			F(C_c^b) &= F(C_a^b \ast C_c^d) = F\left(\lim\limits_{\alpha} (g_\alpha C_c^d)\right) = \lim\limits_{\alpha} F(g_\alpha C_c^d) = \lim\limits_{\alpha} \left(F(g_\alpha) F(C_c^d)\right) \\
			&= F\left(\lim\limits_{\alpha} g_\alpha\right) F(C_c^d) = F(C_a^b) F(C_c^d),
		\end{align*}
		where we used the continuity and equivariance of $F$, as well as the continuity of right multiplication by the element $F(C_c^d)$ (since $wG$ is a semitopological semigroup).
		
		Since $z \in F(\Cc_1)$, there exist $d, e$ such that $F(C_d^e) = z$. If $d = e$, then $z = F(C_e^e)$, and for all $a$ and some $b \neq e$, we have $z = z F(C_a^b) = F(C_e^e) F(C_a^b) = F(C_a^e)$. If $d \neq e$, then $z = z F(C_a^e) = F(C_d^e) F(C_a^e) = F(C_a^e)$ for all $a$. In each case, for any $a, b$ and some $c \neq e$, we have $z = F(C_c^b) z = F(C_c^b) F(C_a^e) = F(C_a^b).$ Therefore, $F(\Cc_1) = \{z\}$.

		\textbf{Induction step.} Suppose $F(\Cc_{n-1}) = \{z\}$. For cycles $\ovl{a}, \ovl{b}, \ovl{c}, \ovl{d}$ such that $d_i \in (a_i, a_{i+1})$ for all $i = 1, \dots, n$ (indices are taken modulo $n$), define
		\[
		C_{\ovl{a}}^{\ovl{b}} \ast C_{\ovl{c}}^{\ovl{d}} := C_{\ovl{c}}^{\ovl{b}}.
		\]
		Similarly to the case $n = 1$, for any net $G \ni g_\alpha \to C_{\ovl{a}}^{\ovl{b}}$, we have $F(C_{\ovl{c}}^{\ovl{b}}) = F(C_{\ovl{a}}^{\ovl{b}} \ast C_{\ovl{c}}^{\ovl{d}}) = F\left(\lim\limits_{\alpha} (g_\alpha C_{\ovl{c}}^{\ovl{d}})\right) = F(C_{\ovl{a}}^{\ovl{b}}) F(C_{\ovl{c}}^{\ovl{d}})$.
		
		Fix $a_2 \in X^+$. Replacing the element $d_2$ of the cycle $\ovl{d}$ in the equality $F(C_{\ovl{c}}^{\ovl{b}}) = F(C_{\ovl{a}}^{\ovl{b}}) F(C_{\ovl{c}}^{\ovl{d}})$ with a net $d_\alpha$, where $(a_2, a_3) \ni d_\alpha \to a_2$, we obtain
		\[
		F(C_{\ovl{c}}^{\ovl{b}}) = F(C_{\ovl{a}}^{\ovl{b}}) F(C_{\ovl{c}}^{d_1, d_\alpha, d_3, \dots, d_n}) \to F(C_{\ovl{a}}^{\ovl{b}}) F(C_{\ovl{c}}^{d_1, a_2, d_3, \dots, d_n}).
		\]
		
		Replacing the element $a_3$ of the cycle $\ovl{a}$ in the equality $F(C_{\ovl{c}}^{\ovl{b}}) = F(C_{\ovl{a}}^{\ovl{b}}) F(C_{\ovl{c}}^{d_1, a_2, d_3, \dots, d_n})$	with a net $a_\beta$, where $(a_2, a_4) \ni a_\beta \to a_2$, we get
		\[
		F(C_{\ovl{c}}^{\ovl{b}}) = F(C_{a_1, a_2, a_\beta, a_4, \dots, a_n}^{\ovl{b}}) F(C_{\ovl{c}}^{d_1, a_2, d_3, \dots, d_n}) \to F(C_{a_1, a_2, a_4, \dots, a_n}^{b_1, b_3, b_4, \dots, b_n}) F(C_{\ovl{c}}^{d_1, a_2, d_3, \dots, d_n}) = z,
		\]
		since $C_{a_1, a_2, a_4, \dots, a_n}^{b_1, b_3, b_4, \dots, b_n} \in \Cc_{n-1}$ and $F(\Cc_{n-1}) = \{z\}$ by the induction hypothesis. Then $F(C_{\ovl{c}}^{\ovl{b}}) = z$. By the arbitrariness of the cycles $\ovl{c}$ and $\ovl{b}$, we have $F(\Cc_{n}) = \{z\}$.
		
		Since the set $\bigcup\limits_{n \in \Nb} \Cc_n$ is dense in $T^{-1}(0)$, it follows that $\{z\} = F(\bigcup\limits_{n \in \Nb} \Cc_n)$ is dense in $F(T^{-1}(0)) = \pi^{-1}(0)$. Therefore, $\pi^{-1}(0) = \{z\}$.
	\end{proof}

	\begin{remark}
		The WAP compactification $wG$ has a zero element and a continuous involution that extends the inversion on $G$.
	\end{remark}
	
	\begin{remark}
		Arguing similarly to \cite{KozLeid-2}, one can show that there exists a homomorphic embedding $i: G \to U(l_2)$ into the unitary group of a Hilbert space, such that $\ovl{i(G)}^{l_2} = wG$.
	\end{remark}

	\section{Corollaries}
	
	\subsection{Equivalent descriptions of \texorpdfstring{$wAut(X,R)$}{wAut(X,R)}}
	
	\begin{propos}\cite[Theorem 6]{GSorin26}\label{prop-e-alphaXG-description}
		The Ellis compactification $e_{\alpha X}G$ is the set of all $f \in \alpha X^{\alpha X}$ satisfying the following properties:
		\begin{enumerate}
			\item $f(\infty) = \infty$;
			\item $\forall a \neq b \in X \; (fa = fb \implies fa = \infty)$;
			\item $\forall a,b,c \in X \; (fa, fb, fc \in X \implies ([fa, fb, fc] \iff [a,b,c]))$.
		\end{enumerate}
		$e_{\alpha X}G$ is a semitopological semigroup.
	\end{propos}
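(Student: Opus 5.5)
The plan is to identify $e_{\alpha X}G$ with the pointwise closure of $G$ in $\alpha X^{\alpha X}$. Here every $g\in G$ is extended to $\alpha X$ by $g(\infty)=\infty$, and $\alpha X^{\alpha X}$ carries the product (pointwise) topology. The argument has three steps: (i) every limit of a net from $G$ satisfies 1--3; (ii) every $f$ satisfying 1--3 lies in $\ovl{G}$; (iii) the resulting semigroup is semitopological. Throughout I would use two facts: $X$ is open and discrete in $\alpha X$, and the neighbourhoods of $\infty$ are the complements of finite subsets of $X$.

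\textbf{Necessity.} Let $g_\alpha\to f$ pointwise. Property 1 holds because $g_\alpha(\infty)=\infty$ for every $\alpha$.

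For property 2, suppose $fa=fb=y\in X$ with $a\neq b$. Since $\{y\}$ is open, eventually $g_\alpha a=g_\alpha b=y$, which contradicts the injectivity of $g_\alpha$.

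For property 3, suppose $fa,fb,fc\in X$. Then eventually $g_\alpha a=fa$, $g_\alpha b=fb$ and $g_\alpha c=fc$. Since each $g_\alpha$ preserves $R$, we get $[a,b,c]\iff[fa,fb,fc]$. This also covers the degenerate cases: if $a,b,c$ are not pairwise distinct then neither are $fa,fb,fc$, by property 2 applied to the distinct values.

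\textbf{Sufficiency.} Let $f$ satisfy 1--3. A basic neighbourhood of $f$ is determined by finitely many points $a_1,\dots,a_n\in X$ and the following requirements. For those $a_i$ with $fa_i\in X$ we require $g a_i=fa_i$. For those $a_j$ with $fa_j=\infty$ we require $ga_j\notin K$, where $K\subset X$ is a given finite set. The coordinate $\infty$ imposes no condition.

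By properties 2 and 3, $f$ restricted to $\{a_i : fa_i\in X\}$ is a partial automorphism. I would extend this finite partial automorphism to the remaining points $a_j$ one at a time, following their cyclic position among the already placed points. Each new point is mapped into the corresponding open interval between already assigned images, and the image is chosen outside $K$ and distinct from the other images. This is possible because the order is dense, so every nonempty open interval of $X$ is infinite, exactly as in the proof of Proposition~\ref{prop-b_XG-description}.

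The result is an order-preserving map between two finite cycles. Ultratransitivity then yields $g\in G$ extending it, and this $g$ lies in the given neighbourhood. This step is the routine core of the argument.

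\textbf{Semitopological structure.} Right multiplication $f\mapsto f\circ h$ is continuous in the product topology for every map $h$, so it needs no argument. Left multiplication $f\mapsto h\circ f$ is continuous provided $h\colon\alpha X\to\alpha X$ is itself continuous, and establishing this is the step I expect to be the key point.

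The map $h$ is automatically continuous at every point of $X$, since $X$ is discrete. At $\infty$, property 2 says that each $y\in X$ has at most one preimage under $h$ in $X$. Hence for finite $K\subset X$ the set $h^{-1}(K)$ is finite. Therefore $h(y)\to\infty=h(\infty)$ as $y\to\infty$, so $h$ is continuous.

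Finally, the set described by 1--3 is closed under composition: this follows from continuity of left multiplication together with density of $G$, or can be checked directly from properties 1--3. Hence $e_{\alpha X}G$ is a semitopological semigroup.
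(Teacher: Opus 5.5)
Your proof is correct. The paper does not prove this statement itself: it is quoted from \cite[Theorem 6]{GSorin26}, and the semitopological property is attributed to \cite[Theorem 1]{GSorin26}. So the only comparison available is with the analogous arguments the paper gives for $b_XG$.

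Your necessity and sufficiency steps are essentially the proof of Proposition~\ref{prop-b_XG-description}, carried over to $\alpha X^{\alpha X}$. Isolated points of $X$ force $g_\alpha a=fa$ eventually. Density of the order together with ultratransitivity realizes a finite partial automorphism while sending the remaining finitely many coordinates outside a finite set $K$. Your basic neighbourhoods are exactly the Fell neighbourhoods of the partial automorphism $\Gamma(f)\cap X^2$, and this correspondence is what underlies the identification $e_{\alpha X}G=b_XG$ in Corollary~\ref{coro-small-comp-equality}.

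You genuinely differ from the paper in the semitopological part. For $b_XG$, the paper checks continuity of left and right translations by hand, using Fell basic neighbourhoods. You instead observe that right translations are automatically continuous in the product topology. You then show that each $h$ in the closure is a continuous self-map of $\alpha X$: properties 1 and 2 make $h^{-1}(K)$ a finite subset of $X$ for every finite $K\subset X$. Hence left translations are continuous as well. This is the standard criterion for an enveloping semigroup to be semitopological, and it is shorter and more conceptual. The paper's computation has the advantage of staying inside the hyperspace picture, which it needs for the graph compactification anyway.

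One cosmetic point: your remark on ``degenerate cases'' in the necessity of property 3 is unnecessary. Since each $g_\alpha$ is an automorphism, $[a,b,c]\iff[g_\alpha a,g_\alpha b,g_\alpha c]$ already holds for all triples.
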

	
	We have the following equivalent descriptions of $wG$.
	
	\begin{coro}\label{coro-small-comp-equality}
		$wG = b_{\alpha X}G = e_{\alpha X}G$.
	\end{coro}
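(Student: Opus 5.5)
Since Theorem~\ref{theorem-wG-description} gives $wG = b_XG$, it suffices to show $b_{\alpha X}G = b_XG$ and $e_{\alpha X}G = b_XG$ as compactifications of $G$. In each case I will write down an explicit bijection, check that it is the identity on $G$, and show that it is continuous from one compact Hausdorff space onto the other. Such a map is then a homeomorphism respecting the embeddings of $G$, so the compactifications are equivalent. In both cases the target is $b_XG$, identified with $(pAut(X,R),\tau_F)$ by Proposition~\ref{prop-b_XG-description}.

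For $e_{\alpha X}G$, I define $\Phi: b_XG\to e_{\alpha X}G$ as follows. A partial automorphism $p$ goes to $f_p$, where $f_p(x)=p(x)$ for $x\in D(p)$ and $f_p(x)=\infty$ otherwise (in particular $f_p(\infty)=\infty$).
\begin{itemize}
\item Proposition~\ref{prop-e-alphaXG-description} shows that $f_p$ lies in $e_{\alpha X}G$.
\item Conversely, every $f$ described there arises from $p = f|_{f^{-1}(X)}$. By property 2 this restriction is injective, and by property 3 it preserves the cyclic order, so $\Phi$ is a bijection.
\item Continuity follows by comparing subbases. In $\alpha X^{\alpha X}$, the condition ``$f(x)=y$'' for $x,y\in X$ corresponds to $\{(x,y)\}^-$. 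The condition ``$f(x)\notin K$'' for finite $K\subset X$ corresponds to the Fell-open set $[\em;\{x\}\times K]$. Evaluation at $\infty$ is constant.
\end{itemize}
Since a neighbourhood base of $\infty$ in $\alpha X$ consists of complements of finite sets, these conditions generate the product topology on the image. So $\Phi$ is continuous, and it is the identity on $G$.

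For $b_{\alpha X}G$, I use the Vietoris graph compactification in $CL(\alpha X\times\alpha X)$. I define $\Psi: b_XG\to b_{\alpha X}G$ by
\[
\Psi(p)=\Gamma(p)\cup\bigl((\alpha X\sm D(p))\times\{\infty\}\bigr)\cup\bigl(\{\infty\}\times(\alpha X\sm I(p))\bigr).
\]
I first check that this is the limit of graphs $\Gamma(g_\alpha)$ along any net $g_\alpha\to p$ in $b_XG$. Take a point $x\notin D(p)$. The values $g_\alpha x$ eventually leave every finite set, so they tend to $\infty$; symmetrically for $g_\alpha^{-1}$.

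Conversely, let $A\in b_{\alpha X}G$. Then $A\cap X^2$ is a partial automorphism $p$, by the same argument as in Proposition~\ref{prop-b_XG-description}. I then need to show $A=\Psi(p)$, and this is the main obstacle. One must rule out that the closure contains extra points $(x,\infty)$ with $x\in D(p)$, and also verify that $\{\infty\}\times(\alpha X\sm I(p))$ is exactly what appears. For the first, note that $(x,\infty)\in A$ together with $(x,p(x))\in A$ contradicts the clopen neighbourhood $\{x\}\times\{p(x)\}$: approximating graphs would send $x$ both to $p(x)$ and into a neighbourhood of $\infty$ avoiding $p(x)$. For the second, use that $\alpha X\times\{\infty\}$ and $\{\infty\}\times\alpha X$ meet every graph's closure appropriately, since each $g\in G$ is a bijection of $X$ that fixes $\infty$.

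Continuity of $\Psi$ is then a check on Vietoris base elements. Nonempty intersection with $\{(x,y)\}$, where $x,y\in X$, is the Fell condition $\{(x,y)\}^-$. Containment in an open set $W$ of $\alpha X\times\alpha X$ means that the finitely many points of $X^2$ outside $W$, together with the compact set $(\alpha X\times\alpha X)\sm W$, are avoided; this translates into finitely many Fell conditions on $p$. Hence $\Psi$ is a continuous bijection fixing $G$, and $b_{\alpha X}G = b_XG = wG$.
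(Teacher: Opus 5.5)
Your proof is correct in substance, but it takes a different route from the paper's.

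\textbf{How the two routes differ.} The paper gets $b_XG=b_{\alpha X}G$ by quoting the general theorem \cite[Theorem 3.33]{KozLeid-2} for locally compact, locally connected $X$. It gets $e_{\alpha X}G=wG$ from the universal property of $wG$. Since $e_{\alpha X}G$ is a semitopological semigroup compactification (Proposition~\ref{prop-e-alphaXG-description}), $wG\geq e_{\alpha X}G$. The compactification map $e_{\alpha X}G\to b_XG$, $f\mapsto\Gamma(f)\cap X^2$, gives the reverse inequality.

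Your $\Phi$ is exactly the inverse of that map. By proving directly that it is a continuous bijection between compacta, you never use the semigroup structure of $e_{\alpha X}G$ or the maximality of $wG$. So $e_{\alpha X}G=b_XG$ becomes a purely topological statement, and this part of your proposal is complete. Your explicit formula for $\Psi(p)$ removes the need for the citation in the discrete case. As a by-product it essentially recovers the description of $b_{\alpha X}G$ in Proposition~\ref{prop-b_alphaXG-description}. The cost is that you must do the Vietoris/Fell bookkeeping yourself, and there your write-up is loose.

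\textbf{The continuity check for $\Psi$.} It is not true that $(\alpha X\times\alpha X)\sm W$ meets $X^2$ in finitely many points; for example, it may contain $\{x\}\times\alpha X$. The correct reduction runs as follows.

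If $(\infty,\infty)\notin W$, the preimage of $W^+$ is empty. Otherwise $C=(\alpha X\times\alpha X)\sm W$ misses some $(\alpha X\sm K)^2$ with $K\subset X$ finite. So $C$ lies in the rows and columns through points of $K$.

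Take a row slice $C_x=\{y\mid (x,y)\in C\}$ with $x\in K$. If $\infty\notin C_x$, then $C_x$ is finite, and the condition is the miss condition $[\varnothing;\{x\}\times C_x]$. If $\infty\in C_x$, then avoiding $(x,\infty)$ forces $x\in D(p)$. The condition is then the hit condition $p\in(\{x\}\times(X\sm C_x))^-$, which is Fell-open. Columns are handled symmetrically.

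You also skipped the sets $V^-$ with $V=\{x\}\times(\alpha X\sm K)$. Their preimage is $[\varnothing;\{x\}\times K]$.

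\textbf{What can be dropped.} Once $\Psi$ is continuous into $CL(\alpha X\times\alpha X)$ and agrees with $i_{\alpha X}$ on $G$, the rest is automatic. The image $\Psi(b_XG)$ is compact, contains $i_{\alpha X}(G)$, and lies in its closure, so it equals $b_{\alpha X}G$. Injectivity follows from $\Psi(p)\cap X^2=\Gamma(p)$. So your vaguer ``conversely'' paragraph and the one-sided net argument can be removed. For the same reason, $\Phi$ does not even need the description in Proposition~\ref{prop-e-alphaXG-description}.
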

	
	\begin{proof}
		Since the discrete space $X$ is locally compact and locally connected, we have $b_XG = b_{\alpha X}G$ \cite[Theorem 3.33]{KozLeid-2}. By Theorem~\ref{theorem-wG-description}, $wG = b_{\alpha X}G$.
		
		By Proposition~\ref{prop-e-alphaXG-description}, $e_{\alpha X} G$ is a semitopological semigroup compactification of $G$, so $wG \geq e_{\alpha X} G$. The reverse inequality follows from the fact that the map $\Psi: e_{\alpha X} G \to b_XG: f \mapsto \Gamma(f) \cap X^2$ is a compactification map.
	\end{proof}
	
	An explicit description of the compactification $b_{\alpha X} G$ follows from the equality $b_XG = \{A \cap X^2 \;\vert\; A \in b_{\alpha X} G\}$ and the condition $\forall x \in \alpha X \;\exists y \in A[x] \;\exists z \in sA[x]$.
	
	\begin{propos}\label{prop-b_alphaXG-description}
		$b_{\alpha X}G := \ovl{(i_{\alpha X}(G))}$ is the set of all $A \in 2^{\alpha X \times \alpha X}$ satisfying the following properties:
		\begin{enumerate}
			\item[(i)] $\forall x \in X \;\exists! y \in \alpha X: y \in A[x]$; $\forall y \in X \;\exists! x \in \alpha X: x \in sA[y]$;
			\item[(ii)] $(\infty, \infty) \in A$; $pr_1(A \sm X^2) = \alpha X \sm D(A)$; $pr_2(A \sm X^2) = \alpha X \sm I(A)$;
			\item[(iii)] $\forall x_1,x_2,x_3 \in D(A) \; ([x_1,x_2,x_3] \implies [A[x_1],A[x_2],A[x_3]])$.
		\end{enumerate}
	\end{propos}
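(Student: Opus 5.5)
We prove two inclusions: every element of $b_{\alpha X}G$ satisfies (i)--(iii), and every $A\in 2^{\alpha X\times\alpha X}$ satisfying (i)--(iii) lies in $\ovl{i_{\alpha X}(G)}$. The main tool is the already established description $b_XG=pAut(X,R)$ (Proposition~\ref{prop-b_XG-description}). We combine it with the restriction map $A\mapsto A\cap X^2$, which is the compactification map $b_{\alpha X}G\to b_XG$ (Corollary~\ref{coro-small-comp-equality}, \cite[Theorem 3.33]{KozLeid-2}). We also use the general fact, valid for graph compactifications (\cite{Usp2}, as in property~1 of Lemma~\ref{lemma-b-betaXG-necess-cond}), that $A[x]\neq\em\neq sA[x]$ for every $x\in\alpha X$.

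\textbf{Necessity.} Let $A=\lim\Gamma(g_\alpha)$ in $(2^{\alpha X\times\alpha X},\tau_V)$. Since $\{x\}$ is open in $\alpha X$ for $x\in X$, any two distinct points $(x,y),(x,y')\in A$ give disjoint open sets $\{x\}\times O_y$, $\{x\}\times O_{y'}$ that $\Gamma(g_\alpha)$ must eventually meet, which contradicts injectivity. Combined with $A[x]\neq\em$, this gives (i). The pair $(\infty,\infty)$ lies in $A$ because $g_\alpha$ fixes $\infty$. Property (iii) follows exactly as in Proposition~\ref{prop-b_XG-description}, since $A\cap X^2\in pAut(X,R)$. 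For (ii), take $x\in X\setminus D(A)$. By (i), $A[x]=\{\infty\}$, so $x\in pr_1(A\setminus X^2)$. Conversely, if $x\in X\cap pr_1(A\setminus X^2)$, then $(x,\infty)\in A$, and uniqueness in (i) forces $x\notin D(A)$. The point $\infty$ itself lies in $pr_1(A\setminus X^2)$ and not in $D(A)$, since $D(A)\subset X$. The statement for $pr_2$ is symmetric, using $sA$.

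\textbf{Sufficiency.} Let $A$ satisfy (i)--(iii). Then $p:=A\cap X^2$ is a partial automorphism, and by (ii) we have
\[A=\Gamma(p)\cup\bigl((\alpha X\setminus D(p))\times\{\infty\}\bigr)\cup\bigl(\{\infty\}\times(\alpha X\setminus I(p))\bigr).\]
This holds because by (i) every point of $A\setminus X^2$ with first coordinate in $X$ has second coordinate $\infty$, and symmetrically. A basic $\tau_V$-neighbourhood of $A$ can be taken of the form $[\W]$, where $\W$ consists of finitely many points $\{(x_i,y_i)\}$ of $\Gamma(p)$, sets $\{x\}\times O$ with $O$ a cofinite neighbourhood of $\infty$, sets $O\times\{y\}$ of the symmetric kind, and $O\times O'$. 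Here $O=\alpha X\setminus K$ with $K$ a finite set chosen to contain all the relevant finite coordinates. We must find $g\in G$ with $\Gamma(g)\in[\W]$, that is:
\begin{itemize}
\item $gx_i=y_i$ for the finitely many chosen pairs;
\item for each listed $x\in K\setminus D(p)$, we have $gx\notin K$;
\item for each listed $y\in K\setminus I(p)$, we have $g^{-1}y\notin K$;
\item $\Gamma(g)$ meets $O\times O'$ and stays inside the union of $\W$.
\end{itemize}
This is the same ultratransitivity-plus-density argument used in the inclusion $pAut\subset b_XG$ of Proposition~\ref{prop-b_XG-description}. We extend the finite partial isomorphism $x_i\mapsto y_i$ by sending the finitely many remaining points of $K$ (both of $K\setminus D(p)$ and, via $g^{-1}$, of $K\setminus I(p)$) into the infinite sets $X\setminus K$ inside the appropriate intervals between the $y_i$. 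We also add one extra pair in $(X\setminus K)^2$ so that $\Gamma(g)$ meets $O\times O'$. Density guarantees that these intervals minus $K$ are infinite, so the extended map is still a cycle-preserving finite map, and ultratransitivity yields $g$.

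\textbf{Main obstacle.} The delicate step is the simultaneous choice of images and preimages outside $K$ while preserving the cyclic order. The requirements $gx\notin K$ for $x\in K\setminus D(p)$ and $g^{-1}y\notin K$ for $y\in K\setminus I(p)$ constrain $g$ in both directions. To handle this, I would first list all points of $K$ together with the $x_i$ in cyclic order. Each point $x$ of $K\setminus D(p)$ is assigned a fresh image in $X\setminus K$, chosen inside the gap between consecutive assigned images. Each point $y$ of $K\setminus I(p)$ is assigned a fresh preimage in $X\setminus K$, chosen inside the corresponding gap of the domain side. Because every gap contains infinitely many points outside $K$, all these choices can be made consistently, and the resulting finite map is order-preserving.
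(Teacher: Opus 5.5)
Your proof is correct. Necessity matches what the paper has in mind: (i) comes from single-valuedness and injectivity of the approximating $g_\alpha$ together with $A[x]\neq\varnothing\neq sA[x]$. Next, $(\infty,\infty)\in A$ because the closed sets containing $(\infty,\infty)$ form a Vietoris-closed family. Finally, (ii) is a formal consequence of (i) and $(\infty,\infty)\in A$.

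For sufficiency you take a different route from the paper. The paper does not re-run an approximation argument; it uses $b_XG=\{A\cap X^2\mid A\in b_{\alpha X}G\}$. If $A$ satisfies (i)--(iii), then $p=A\cap X^2\in pAut(X,R)=b_XG$, so some $A'\in b_{\alpha X}G$ has $A'\cap X^2=p$. By necessity $A'$ satisfies (i)--(ii). Your own displayed formula shows that (i)--(ii) determine a set from its trace on $X^2$, so $A=A'$. You had every ingredient for this on the table, and it finishes in a few lines by reusing Proposition~\ref{prop-b_XG-description}.

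Your direct density argument instead re-proves approximability from scratch. It is self-contained, needing neither $b_XG=b_{\alpha X}G$ nor surjectivity of the restriction map. It works provided the singletons in $\W$ are \emph{all} pairs $(x,px)$ with $x\in K$ or $px\in K$. You cannot in general enlarge $K$ to contain both coordinates of every such pair, since $p$ may have infinite orbits. With that reading, your bullet list is exactly right.

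Two simplifications are available. The extra pair in $(X\sm K)^2$ is superfluous, because $(\infty,\infty)\in\Gamma(g)$ already meets $O\times O$. Your ``main obstacle'' also disappears if you use the partition $\{\{k\}\}_{k\in K}\cup\{O\}$ of $\alpha X$. Demand only $gx=px$ for $x\in K\cap p^{-1}(K)$ and $gx\notin K$ for the other $x\in K$. This forces $g(K)\cap K\subset p(K\cap D(p))$, so all conditions on $g^{-1}$ hold automatically. The construction then becomes literally the one-sided ultratransitivity argument of Proposition~\ref{prop-b_XG-description}.
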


	\subsection{Semitopological semigroup compactifications of \texorpdfstring{$Aut(X,R)$}{Aut(X,R)}}
	
	In \cite{Maltsev}, a description of all (two-sided) ideals of the symmetric inverse semigroup $I_X$ is given. Recall that for a partial bijection $p \in I_X$, the image of $p$ is defined as $I(p) = \{ y \in X \;\vert\; \exists x \in X: (x,y) \in p \}$. For an arbitrary cardinal $\xi \leq |X|$, the set
	\[
	I^0_\xi = \{p \in I_X \;\vert\; |I(p)| < \xi\}
	\]
	is an ideal.
	
	Since $b_XG$ is a subsemigroup of $I_X$ consisting of all partial automorphisms, the sets
	\[
	I_\xi = I^0_\xi \cap b_XG = \{p \in b_XG \;\vert\; |I(p)| < \xi\} \subset b_XG \sm G,
	\]
	where $\xi \leq |X|$, are proper ideals in $b_XG$.
	
	\begin{propos}\label{prop-ideals-in-b_XG}
		The ideal $I_\xi$ is closed in $b_XG$ if and only if the cardinal $\xi$ is finite.
	\end{propos}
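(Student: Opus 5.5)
The proof splits into the two implications, and in both I will work with the description of $b_XG$ from Proposition~\ref{prop-b_XG-description}: $b_XG=pAut(X,R)$ with the Fell topology on $2^{X\times X}$. Since $X\times X$ is discrete, the compact sets are the finite ones. So a basic neighbourhood of $p$ has the form $[\{(x_1,y_1)\},\dots,\{(x_n,y_n)\};K]$ with $K$ finite. In particular, $p_\alpha\to p$ means exactly this: for every pair $(x,y)\in X^2$, the statement $(x,y)\in p_\alpha$ is eventually true if $(x,y)\in p$ and eventually false otherwise. In other words, convergence is pointwise convergence of indicator functions on $X^2$.

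\emph{Finite $\xi$ gives a closed ideal.} Let $\xi=k\in\Nb$. Then $I_k=\{p\;\vert\;|p|<k\}$, because $p$ is a bijection $D(p)\to I(p)$ and so $|I(p)|=|p|$. I will show the complement is open. Take $p$ with $|p|\geq k$ and pick distinct pairs $(x_1,y_1),\dots,(x_k,y_k)\in p$. The set $[\{(x_1,y_1)\},\dots,\{(x_k,y_k)\};\em]$ is a neighbourhood of $p$. Every $q$ in it contains $k$ distinct pairs, so $|I(q)|\geq k$ and $q\notin I_k$. Hence $I_k$ is closed. (For $k=0$ the ideal is empty, which is trivially closed.)

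\emph{Infinite $\xi$ gives a non-closed ideal.} Let $\xi\ge\aleph_0$. It suffices to show that the closure of $I_{\aleph_0}$, the set of finite partial automorphisms, contains an element of cardinality $\geq\xi$; the identity $\mathrm{id}_X\in G$ will do. Note first that $\xi\le|X|$ and $G\cap I_\xi=\em$, since $|I(g)|=|X|\ge\xi$ for every $g\in G$. Next, for each finite $F\subset X$ the restriction $\mathrm{id}_F$ is a partial automorphism with $|I(\mathrm{id}_F)|<\aleph_0\le\xi$, so $\mathrm{id}_F\in I_{\aleph_0}\subset I_\xi$. Direct the finite subsets $F$ by inclusion. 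The net $\mathrm{id}_F$ converges to $\mathrm{id}_X$ pointwise on indicators: a pair $(x,x)$ lies in $\mathrm{id}_F$ once $x\in F$, and a pair $(x,y)$ with $x\ne y$ never does. Equivalently, a basic neighbourhood $[\{(x_i,x_i)\}_{i\le n};K]$ of $\mathrm{id}_X$ contains $\mathrm{id}_F$ for $F=\{x_1,\dots,x_n\}$, because $K\cap\mathrm{id}_X=\em$ forces $K\cap\mathrm{id}_F=\em$. Thus $\mathrm{id}_X\in\ovl{I_\xi}\sm I_\xi$.

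The main point to handle carefully is the identification of the Fell topology on $b_XG$ with pointwise convergence of indicators, specifically that compact subsets of the discrete space $X^2$ are finite. Once that is in place, both directions are short. I would also confirm that the basic sets with finitely many singleton $V_i$ suffice as a base, which holds because $X^2$ is discrete and $W^-=\bigcup_{w\in W}\{w\}^-$.
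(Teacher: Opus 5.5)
Your proof is correct and follows the same approach as the paper. The finite case is identical. For infinite $\xi$, the paper shows that $I_\xi$ is dense: it picks $g\in G\cap W$ in each basic open set $W$ and restricts $g$ to the finitely many points $x_i$. Your net $\mathrm{id}_F\to\mathrm{id}_X$ is just the special case $g=\mathrm{id}_X$ of that argument, which suffices because $\mathrm{id}_X\notin I_\xi$.
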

	
	\begin{proof}
		Let $\xi = n$. Fix $p \notin I_n$. Then $|D(p)| = |I(p)| \geq n$, so $p$ contains $n$ distinct points $(x_i, y_i)$ and $p \in W = \bigcap\limits_{i=1}^n \{(x_i, y_i)\}^-$. Since $W \cap I_n = \em$, the set $I_n$ is closed.
		
		Conversely, let $\xi \geq \omega$. It suffices to show that $I_\xi$ is dense in $b_XG$ (since the ideal is proper). A basis element of the topology on $b_XG$ has the form $W = \bigcap\limits_{i=1}^n \{(x_i,y_i)\}^- \cap \bigcap\limits_{j=1}^m \{(a_j,b_j)\}^+$. Since $W \neq \em$, there exists $g \in G \cap W$. Then the partial automorphism $p = g|_{\{x_1,\dots, x_n\}} \in W \cap I_\xi$.
	\end{proof}
	
	The group $G$ has a countable family of semitopological semigroup compactifications with a continuous involution.
	
	\begin{coro}\label{coro-rees-quotients}
		For each $n \in \Nb$, the Rees quotient semigroup $b_XG/I_n$ is a semitopological semigroup compactification of $G$. If $n < m$, then $b_XG/I_n > b_XG/I_m$.
		
		If $X$ is countable, then the compactifications $b_XG$ and $b_XG/I_n$, $n \in \Nb$, are homeomorphic to the Cantor set.
	\end{coro}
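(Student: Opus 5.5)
We need to prove Corollary \ref{coro-rees-quotients}: $b_XG/I_n$ is a semitopological semigroup compactification of $G$, these quotients are strictly decreasing in $n$, and for countable $X$ they are Cantor sets.

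\emph{Rees quotients are compactifications.} By Proposition~\ref{prop-ideals-in-b_XG}, $I_n$ is a closed ideal of the compact Hausdorff semitopological semigroup $b_XG$ (Proposition~\ref{prop-b_XG-description}, Theorem~\ref{theorem-wG-description}). Collapsing the closed set $I_n$ to a point gives a compact Hausdorff space, since a compact Hausdorff space modulo a closed subset is Hausdorff. The Rees congruence makes $b_XG/I_n$ a semigroup with zero $I_n$. For separate continuity, fix $q$ and consider $\rho_q: p\mapsto pq$ on $b_XG$. It maps $I_n$ into $I_n$ because $I_n$ is an ideal, so it descends to a continuous map of the quotient by the universal property of quotient maps; left multiplication is handled the same way. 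The involution $s$ preserves $I_n$, since $|I(sp)|=|D(p)|=|I(p)|$, so it also descends continuously.

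Next, $G\cap I_n=\em$, because every element of $G$ has infinite image. The quotient map is injective on $G$, since only $I_n$ is collapsed. It is also open on $G$: $G$ is open in $b_XG$ minus $I_n$? Instead, I would argue that $b_XG\sm I_n$ is open and maps homeomorphically onto its image, which is open in the quotient. Hence $G$ embeds homeomorphically as a dense subset, and $b_XG/I_n$ is a (proper) semitopological semigroup compactification.

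\emph{Strict comparison.} For $n<m$ we have $I_n\subset I_m$, so the quotient map $b_XG/I_n\to b_XG/I_m$ is a compactification map and $b_XG/I_n\ge b_XG/I_m$. It is not an equivalence, because it identifies two distinct points of $b_XG/I_n$. Take $p$ with $|I(p)|=n$ (for $n\ge1$), for example $\{(a_1,a_1),\dots,(a_n,a_n)\}$, and take $0$. In $b_XG/I_m$ both lie in $I_m$, but in $b_XG/I_n$ the point $p$ lies outside $I_n$. Here $I_1=\{0\}$, so $b_XG/I_1=b_XG$.

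\emph{Cantor set.} Assume $X$ is countable; I would apply Brouwer's characterization. First, $b_XG$ is second countable: the Fell topology on $2^{X\times X}$ with $X\times X$ countable discrete has a countable base $[V_1,\dots;K_1,\dots]$ with finite $V_i$, $K_j$. Indeed, $2^{X\times X}$ embeds into $\{0,1\}^{X\times X}$, so the space is metrizable and zero-dimensional. The quotient by a closed set keeps these properties; I would check this using clopen neighbourhoods of $I_n$, obtained from finite unions of basic sets $\bigcap\{(x_i,y_i)\}^-$ whose complement is clopen.

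It remains to show there are no isolated points, and this is the main check. A basic nonempty set $W$ contains some $g\in G$. Modifying $g$ by ultratransitivity outside finitely many points gives another $g'\in W$, so $G$ has no isolated points in $b_XG$. Since $G$ is dense, no point of $b_XG$ is isolated. The same holds in the quotient, since the point $I_n$ is a limit of $G$ as well.
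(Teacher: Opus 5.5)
Your proposal is correct and follows the paper's proof. The paper's three steps are the closed ideal $I_n$ disjoint from $G$, strictness witnessed by a partial identity on $n$ points, and Brouwer's characterization of the Cantor set, and you use the same three. The differences are that you get metrizability and zero-dimensionality directly from $2^{X\times X}\cong\{0,1\}^{X\times X}$ in the Fell topology rather than from $b_XG=e_{\alpha X}G\subset \alpha X^{\alpha X}$, and that you actually verify the separate continuity on the quotient and the absence of isolated points, which the paper only asserts.
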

	
	\begin{proof}
		The first statement follows from the inclusion $I_n \subset b_XG \sm G$, as well as Propositions~\ref{prop-b_XG-description} and~\ref{prop-ideals-in-b_XG}.
		
		Obviously, if $n < m$, then the quotient map $q: b_XG/I_n \to b_XG/I_m$ defines the inequality $b_XG/I_n \geq b_XG/I_m$ (since $I_n \subset I_m$) for semitopological semigroup compactifications of $G$. We now show that $b_XG/I_n \neq b_XG/I_m$. Suppose the contrary; then $q$ is both a homeomorphism and a semigroup isomorphism. Let $A \subset X$ with $|A| = n$. Then the partial automorphism $id_A \in I_m \sm I_n$. Denote by $f_n$ (respectively $f_m$) the compactification map $b_XG \to b_XG/I_n$ (respectively $b_XG \to b_XG/I_m$), which is a homomorphism; then $qf_n = f_m$. Since $f_n(id_A) \neq 0_n$ and $f_m(id_A) = 0_m$, we have $q^{-1}(0_m) \neq 0_n$. This is a contradiction.
		
		The last statement follows from Corollary~\ref{coro-small-comp-equality}: $b_XG = e_{\alpha X}G \subset \alpha X^{\alpha X}$. If $X$ is countable, then $\alpha X$ is metrizable, so $\alpha X^{\alpha X}$ and $b_XG$ are metrizable. The compactifications $b_XG/I_n$, $n \in \Nb$, are metrizable as perfect images of the metrizable compact space $b_XG$. The spaces $b_XG$ and $b_XG/I_n$, $n \in \Nb$, are zero-dimensional, metrizable compacta without isolated points; hence, they are homeomorphic to the Cantor set.
	\end{proof}

	\subsection{The WAP compactification of the group \texorpdfstring{$(Aut(X,R), \tau_p)$}{(Aut(X,R), tau\_p)} is trivial}
	
	Equip the c.o. set $(X,R)$ with the interval topology $\tau_R$, whose basis consists of all intervals $(a,b)$, where $a,b \in X$. The group $H = (Aut(X,R), \tau_p)$, endowed with the topology of pointwise convergence for its action on $(X,\tau_R)$, is a topological group, and the action $H \acts (X,\tau_R)$ is continuous \cite[Theorem 2]{GSorin25}.
	
	\begin{theorem}\label{theorem-wap-trivial}
		The WAP compactification $wH$ is trivial.
	\end{theorem}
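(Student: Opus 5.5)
Since $\tau_p \subseteq \tau_\partial$ on $Aut(X,R)$, the identity $G=(Aut(X,R),\tau_\partial)\to H$ is a continuous isomorphism. Hence $WAP(H)\subset WAP(G)$, and every function $\phi\in WAP(H)$ extends continuously to $wG=b_XG=(pAut(X,R),\tau_F)$ by Theorem~\ref{theorem-wG-description}. The plan is to show that every such extension $\bar\phi$ is constant. The natural target value is $\bar\phi(0)$, where $0$ is the empty partial automorphism. It then suffices to prove that $\phi(g)=\bar\phi(0)$ for every $g\in G$. Since $G$ is dense in $wG$ and $\bar\phi$ is continuous, this makes $\bar\phi$ constant, so $WAP(H)=\Cb$ and $wH$ is a single point.

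\textbf{Key step: using $\tau_p$-continuity.} Fix $g\in H$. I will build a net $(g_\alpha)$ in $G$ with $g_\alpha\to g$ in $\tau_p$ (interval topology on $X$) but $g_\alpha\to 0$ in $\tau_F$ on $b_XG$. The second condition means that for every finite set $K\subset X^2$, eventually $\Gamma(g_\alpha)\cap K=\em$, i.e.\ $g_\alpha x\neq y$ for each pair $(x,y)\in K$.

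Index the net by pairs $(F,\mathcal O)$, where $F\subset X$ is finite and $\mathcal O=\{O_x\}_{x\in F}$ is a choice of interval neighbourhoods $O_x\ni gx$. We need $g_\alpha\in G$ with $g_\alpha x\in O_x\sm\{gx\}$ for $x\in F$, and additionally $g_\alpha x\neq y$ for the finitely many prescribed pairs. Such $g_\alpha$ exists: the order is dense and the intervals are infinite, so we can pick targets $y_x\in O_x$ with $y_x\neq gx$, avoiding finitely many forbidden points, and keeping the cyclic order of $(gx)_{x\in F}$. This uses small disjoint convex $O_x$. Ultratransitivity (as in the proof of Proposition~\ref{prop-b_XG-description}) then gives $g_\alpha$. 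To also kill pairs $(x,y)$ with $x\notin F$, enlarge $F$ to include $pr_1(K)$ and choose a target $\neq y$ for those points as well. With this, $g_\alpha\to g$ pointwise in $(X,\tau_R)$, while $\Gamma(g_\alpha)$ eventually misses any finite $K$, so $g_\alpha\to 0$ in $b_XG$.

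\textbf{Conclusion.} Continuity of $\phi$ on $H$ gives $\phi(g_\alpha)\to\phi(g)$. Continuity of $\bar\phi$ on $b_XG$ gives $\phi(g_\alpha)\to\bar\phi(0)$. Hence $\phi(g)=\bar\phi(0)$ for all $g$, and $\phi$ is constant.

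The main obstacle is the first reduction: making sure that a WAP function on $H$ is WAP on $G$, i.e.\ that $WAP(H)\subset WAP(G)$ under a continuous homomorphism. This is standard, since the orbit sets coincide and weak compactness in $C_b(H)$ passes to $C_b(G)$ via the isometric inclusion. The net construction is routine. One must only check that directedness is compatible with simultaneously refining neighbourhoods and enlarging the finite sets.
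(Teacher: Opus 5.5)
Your proof is correct, but it takes a different route from the paper's.

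\textbf{The paper's route.} The paper never uses Theorem~\ref{theorem-wG-description} here. It takes an arbitrary continuous dense homomorphism $f\colon H\to S$ into a compact semitopological semigroup. It applies the external result \cite[Theorem 5.10]{KozLeid-2} (the linear-order, $\tau_p$ case) to each stabilizer $St_x$ to get $f(St_x)=\{u\}$. It then uses the algebraic fact that every fixed-point-free $g$ factors as $g=h_1h_2$ with $h_2\in St_{z_0}$ and $h_1\in St_{y_0}$, so $f(g)=u^2=u$.

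\textbf{Your route.} You obtain the $\tau_p$ statement as a corollary of the $\tau_\partial$ description. Since the identity $G\to H$ is continuous, $WAP(H)\subset WAP(G)$, so $wH$ is a quotient of $wG=b_XG$. Your net then shows that every $g\in G$ and the zero $0$ have the same image. Equivalently, every $\tau_p$-neighbourhood of $g$ meets every Fell neighbourhood of $0$ inside $G$.

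\textbf{What each approach buys.} Your route needs neither the linear-order $\tau_p$ theorem nor the factorization trick, and it explains the collapse conceptually. However, it leans on the most delicate part of Theorem~\ref{theorem-wG-description}, namely $|\pi^{-1}(0)|=1$, i.e.\ continuity of the extension $\bar\phi$ at $0$. Your argument as written needs exactly this; knowing only that $b_XG$ is a Rees quotient of $wG$ would leave $\phi(H)$ inside $\hat\phi(\pi^{-1}(0))$ rather than a single value. The paper's proof is shorter and independent of all the Roelcke/WAP machinery of Sections~3--4.

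\textbf{A simplification of your net.} You do not need to track $K$ separately in the directed set. Index by $(F,\mathcal O)$ alone and require $g_\alpha x\in O_x\setminus F$ for $x\in F$. Once $F\supseteq pr_1(K)\cup pr_2(K)$, this already gives $\Gamma(g_\alpha)\cap K=\emptyset$ for any finite $K\subset X^2$.
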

	
	\begin{proof}
		Let $S$ be a compact semitopological semigroup, and let $f: H \to S$ be a continuous semigroup homomorphism with dense image. It suffices to show that $|S| = 1$.
		
		Since the action $H \acts (X,R)$ is ultratransitive, for each point $x \in X$, the action $St_x \acts (X \sm \{x\}, <_x)$ is ultratransitive. The topology of pointwise convergence for the action $St_x \acts (X \sm \{x\}, \tau_{<_x})$ coincides with the original topology on $St_x$ (here $\tau_{<_x}$ denotes the interval topology induced by the linear order $<_x$). By \cite[Theorem 5.10]{KozLeid-2}, the WAP compactification of the group $St_x$ is trivial. Consequently, $f(St_x) = \{u_x\}$ for all $x \in X$.
		
		Since $St_x \cap St_y \neq \em$ for all $x,y \in X$, we have $u := u_x = u_y$. For the identity element $e \in H$, we have $f(e) = u$. Therefore, $u$ is an idempotent.
		
		We have $f\left(\bigcup\limits_{x \in X} St_x\right) = \{u\}$. Let $g \in H \sm \bigcup\limits_{x \in X} St_x$. Then $g$ has no fixed points; in particular, for some $x_0 \in X$, we have $gx_0 = y_0 \neq x_0$. Fix $z_0 \in X \sm \{x_0, y_0\}$. By ultratransitivity, there exists $h_2 \in St_{z_0}$ such that $h_2 x_0 = y_0$. Set $h_1 = gh_2^{-1}$. Then $h_1 \in St_{y_0}$. Hence, $g = h_1 h_2$, and $f(g) = f(h_1) f(h_2) = u^2 = u$. Thus, $f(H) = \{u\}$, and the density of $f(H)$ in $S$ implies $|S| = 1$.
	\end{proof}
	
	\begin{remark}
		The triviality of the WAP compactification of the automorphism group $(Aut(\Tb, R), \tau_p)$ of the circle is proved in \cite[Remark 7.5.2]{GlasMegTame}.
	\end{remark}
	
	\newpage
	
\end{fulltext}

\end{document}